\documentclass[10pt,twocolumn,amsmath,amssymb,aps,pra,secnumarabic,
    nofootinbib,groupedaddress,floatfix]{revtex4-1}

\usepackage{mathtools,mathptmx,amsthm,tikz,enumitem,eucal}
\usepackage[colorlinks=true,linkcolor=red!70!black,citecolor=blue!70!black,
    urlcolor=magenta!70!black,backref=page]{hyperref}
\setlist[enumerate,1]{font=\bfseries,label=\arabic*.}
\setlist[enumerate,2]{font=\bfseries,label=(\alph*)}
\numberwithin{equation}{section}

\usepackage{dingbat}

\makeatletter\def\@bibdataout@init{}\def\pre@bibdata{}\makeatother

\counterwithin{figure}{section}

\usetikzlibrary{arrows,shadings,calc}
\usetikzlibrary{decorations.pathreplacing}

\tikzstyle{basic}=[thick,scale=.35,baseline=-.7ex]

\tikzstyle{overcross}=[draw=white,double=black,line width=2pt,
    double distance=0.8pt] 

\newenvironment{tp}{\begin{tikzpicture}}{\end{tikzpicture}}

\colorlet{darkred}{red!70!black}
\colorlet{medred}{red!85!black}
\colorlet{medblue}{blue!85!black}
\colorlet{medyel}{yellow!80!black}
\colorlet{medmag}{magenta!70!black}
\colorlet{medcyan}{cyan!80!black}
\colorlet{medgreen}{green!75!black}

\newtheorem{theorem}{Theorem}[section]
\newtheorem{corollary}[theorem]{Corollary}
\newtheorem{lemma}[theorem]{Lemma}
\theoremstyle{definition}

\theoremstyle{remark}
\newtheorem*{remark}{Remark}
\newtheorem*{example}{Example}

\newcommand{\Cor}[1]{Corollary~\ref{#1}}

\newcommand{\Fig}[1]{Figure~\ref{#1}}
\newcommand{\Lem}[1]{Lemma~\ref{#1}}
\newcommand{\Sec}[1]{Section~\ref{#1}}
\newcommand{\Thm}[1]{Theorem~\ref{#1}}
\newcommand{\equ}[1]{equation~\eqref{#1}}

\newcommand{\ie}{\emph{i.e.}}

\newcommand{\Z}{\mathbb{Z}}
\newcommand{\Q}{\mathbb{Q}}

\newcommand{\cI}{\mathcal{I}}
\newcommand{\cJ}{\mathcal{J}}
\newcommand{\cK}{\mathcal{K}}
\newcommand{\cL}{\mathcal{L}}
\newcommand{\cP}{\mathcal{P}}
\newcommand{\cO}{\mathcal{O}}
\newcommand{\cT}{\mathcal{T}}

\newcommand{\bcO}{\overline{\cO}}

\newcommand{\AS}{\text{AS}}
\newcommand{\Br}{\text{Br}}
\newcommand{\CL}{\text{CL}}
\newcommand{\Di}{\text{Di}}
\newcommand{\IHX}{\text{IHX}}

\newcommand{\Aut}{\operatorname{Aut}}
\newcommand{\hAut}{\operatorname{hAut}}
\newcommand{\Out}{\operatorname{Out}}

\newcommand{\Coinv}{\operatorname{Coinv}}
\newcommand{\Inv}{\operatorname{Inv}}

\newcommand{\GL}{\operatorname{GL}}
\newcommand{\SL}{\operatorname{SL}}

\newcommand{\onto}{\twoheadrightarrow}
\newcommand{\longto}{\longrightarrow}
\newcommand{\acts}{\curvearrowright}
\renewcommand{\tensor}{\otimes}
\newcommand{\consum}{\;\#\;}
\newcommand{\st}{\,\mid\,}
\newcommand{\sm}{\smallsetminus}

\newcommand{\braket}[1]{\langle #1\rangle}

\newcommand{\tGamma}{{\tilde{\Gamma}}}
\newcommand{\dGamma}{\partial\Gamma}
\newcommand{\vell}{{\vec{\ell}}}
\newcommand{\vk}{{\vec{k}}}
\newcommand{\vx}{{\vec{x}}}

\newcommand{\anchvert}{{\scriptsize\anchor}}

\def\app#1#2{\mathrel{\setbox0=\hbox{$#1\sim$}%
    \setbox2=\hbox{\rlap{\hbox{$#1\propto$}}\lower1.1\ht0\box0}%
    \raise0.25\ht2\box2}}

\def\api#1#2{\mathrel{%
    \setbox0=\hbox{$#1\sim$}%
    \setbox2=\hbox{\rlap{\hbox{$#1\in$}}\lower1.5\ht0\box0}%
    \raise0.35\ht2\box2}}

\newcommand{\defeq}{\stackrel{\text{def}}=}

\newenvironment{eq}[1]{\begin{equation} \label{#1}}
    {\end{equation}\ignorespacesafterend}
\newcommand{\eatline}{\vspace{-\baselineskip}}

\begin{document}
\title{Odd Vassiliev invariants vanish at four loops}
\author{Greg Kuperberg}
\email{greg@math.ucdavis.edu}
\thanks{Partly supported by NSF grant CCF-2317280.}
\affiliation{University of California, Davis}

\date{\today}

\begin{abstract}

\centerline{\textit{\normalsize \`A mes amis \`a l'Institut
    Fourier \`a Grenoble en 2010--2011}}
\vspace{\baselineskip}

The odd Vassiliev conjecture asserts that all Jacobi diagrams with an
odd number of legs vanish modulo the AS and IHX relations.  Equivalently,
the conjecture asserts that Vassiliev invariants cannot distinguish a knot
from its inverse. The conjecture is elementary for one 1 or 2 loops, and was
proven by Moskovich and Ohtsuki for 3 loops.  We establish the conjecture for
4 loops.  Our proof uses Jacobi diagrams and weight systems with polynomial
coefficients, commuting colored legs, and several vanishing criteria.
\end{abstract}

\maketitle

\section{Introduction}

An invariant function $I:\cK \to \Q$ on the set of isotopy classes of
oriented knots is \emph{finite type} or \emph{Vassiliev} if some $n+1$st
finite difference with respect to switching crossings vanishes identically.
A \emph{Jacobi diagram} is a unitrivalent graph, considered as a vector in
the vector space of such diagrams modulo the AS and IHX relations.  Connected
Jacobi diagrams are dual to primitive generators of the Hopf algebra of
Vassiliev invariants.  Jacobi diagrams are bigraded by the loop number $g$
and the leg number $\ell$, where the corresponding primitive Vassiliev
invariants have degree $g+\ell-1$.  See Bar-Natan \cite{Bar-Natan:vassiliev}.

The definition of a Vassiliev invariant uses both an orientation of space
and an orientation along the knot $K$.  Although they have no trouble
detecting a reflection of space, the question arises whether any Vassiliev
invariant can detect the more subtle inversion of the orientation along the
knot \cite{Trotter:noninvert}.  If Vassiliev invariants cannot detect knot
inversion, then they cannot distinguish certain prime, unoriented knots
\cite{K:invert}, but they might still distinguish all prime, atoroidal knots.

Inverting an oriented knot yields an involution on Vassiliev invariants and
Jacobi diagrams.  A Vassiliev invariant is called \emph{odd} (respectively
\emph{even}) when it is an eigenvector of this involution with eigenvalue
$-1$ (respectively $+1$).  Dually, every Jacobi diagram is an eigenvector,
with eigenvalue $(-1)^\ell$ when it has $\ell$ legs.  All known Vassiliev
invariants are even, which suggests that they all are.  Dually, there are
no odd Vassiliev invariants if and only if every Jacobi diagram with an
odd number of legs is annihilated by the AS and IHX relators.

Odd vanishing is elementary when a Jacobi diagram $\tGamma$ has 1 or 2
loops, and was established by Moskovich and Ohtsuki when $\tGamma$ has 3
loops \cite{MO:vanishing}.  (It also holds for 1 leg and any loop number
$g$ \cite[Prop.~4.3]{Vogel:diagrams}; and it holds when $g+\ell \le 13$
\cite{Kneissler:twelve}.)  In this article, we advance the Moskovich--Ohtsuki
result by 1.

\begin{theorem} Let $\cP_{g,\ell}$ be the rational vector space of
connected Jacobi diagrams with $g$ loops and $\ell$ legs, modulo the AS
and IHX relations.  Then $\cP_{g,\ell} = 0$ when $g \le 4$ and $\ell$ is odd.
\label{th:odd4} \end{theorem}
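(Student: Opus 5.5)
We reduce the statement to finitely many trivalent "core" graphs and then show that each core contributes nothing in odd leg degree. Every connected Jacobi diagram with $g\ge 1$ loops and $\ell$ legs comes from a connected trivalent core graph $\Gamma$ of first Betti number $g$ (which may have hairs or loop edges), by attaching legs along its edges. Here $\Gamma$ is obtained by pruning trees and smoothing bivalent vertices. For $g\le 4$ there are finitely many such cores: at most $2g-2=6$ trivalent vertices and $3g-3=9$ edges. Using IHX, legs on trees may be pushed onto the edges of the core. So $\cP_{g,\ell}$ is spanned by cores decorated with legs on their edges. Moreover, the legs on a single edge may be permuted, since AS and IHX let adjacent legs commute modulo diagrams with fewer legs on that edge but one more leg elsewhere, or with a tree moved.

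This suggests the following organization. For each core $\Gamma$ we form a space of "colored" diagrams. Each edge $e$ carries a commuting monomial $x_e^{k_e}$, recording $k_e$ legs on $e$. The space of decorated $\Gamma$ diagrams becomes a quotient of the polynomial ring $\Q[x_e]$, twisted by the sign character of $\Aut(\Gamma)$ coming from AS and edge reversal. We then quotient further by linear relations coming from IHX at each vertex. These take the form "sum of the three $x$-variables at a vertex is zero" modulo lower filtration: a leg slid around a trivalent vertex lands on the three adjacent edges with signs. Reversing an edge $e$ sends $x_e\mapsto -x_e$. Hence the image of $\cP_{g,\ell}$ from core $\Gamma$ is a quotient of the coinvariants
\[\Coinv_{\Aut(\Gamma)}\bigl(\Q[x_e]/(\text{vertex relations})\bigr)_{\ell},\]
up to filtration by the number of legs that sit on collapsed trees. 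We then argue by induction on that filtration. The essential vanishing criterion is this: if some automorphism (or edge-reversing symmetry) of $\Gamma$ acts on the degree-$\ell$ part by $-1$ when $\ell$ is odd, the coinvariants vanish. The model case is the overall symmetry $x\mapsto -x$ combined with a graph automorphism of sign $+1$. A second criterion handles the remaining cases: if a leg variable can be written, via vertex relations, as a combination whose coinvariant images cancel, that degree vanishes as well.

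The plan is therefore as follows. (1) Enumerate the trivalent cores of Betti number $\le 4$, including degenerate ones with loops, double edges and $\theta$-subgraphs, and compute $\Aut(\Gamma)$ with its sign character. (2) For each core, present the leg space as $\Q[x_e]$ modulo the vertex linear relations. This is a polynomial ring in $g$ free variables, so it is essentially $\Q[H_1(\Gamma)]$, with $H_1(\Gamma)$ the cycle space. Identify the induced action of $\Aut(\Gamma)$ on $H_1(\Gamma;\Q)$ and its twist by the sign character. (3) Show that odd-degree coinvariants vanish, by exhibiting an element acting as $-1$ on $H_1$ with sign $+1$, or as $+1$ on $H_1$ with sign $-1$. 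For $g\le 3$ this recovers Moskovich–Ohtsuki. (4) Handle the lower-order correction terms by induction on $\ell$ and on the core complexity, to show that the filtration does not spoil vanishing.

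The main obstacle is step (3) for the 4-loop cores whose automorphism group contains no such "odd" element. The candidates are the complete graph $K_4$ with doubled structure, the cube-like and Möbius-type 3-regular graphs on 6 vertices, and especially asymmetric cores. For these, the symmetry argument alone fails. There I would first try finer relations: the quadratic relations obtained by sliding two legs past a vertex, and the relation between coinvariants of a core and those of its edge contractions. Failing that, I would try the weight-system check with polynomial coefficients, to identify exactly which odd-degree polynomial invariants survive in $\Coinv$. I would then kill them by explicit IHX manipulations relating the asymmetric core to a more symmetric one.
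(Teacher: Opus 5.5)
\textbf{There is a genuine gap in step (3), and it appears already at three loops.} The odd-degree coinvariants of $\Aut(\Gamma)$ on $S^\ell(H^1(\Gamma))$ for a single skeleton (with the AS twist) need not vanish, so your symmetry criterion cannot drive the proof.

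Take the tetrahedron. Every $f\in S_4$ has trivial AS sign and acts on the face cycles by $x_i\mapsto(-1)^f x_{f(i)}$, where $x_1+\cdots+x_4=0$. So for odd $\ell$, an invariant weight polynomial is exactly an alternating polynomial in $x_1,\dots,x_4$ modulo $x_1+\cdots+x_4$. For example, $\Delta e_3$ with $\Delta=\prod_{i<j}(x_i-x_j)$ is a nonzero one of degree $9$. No automorphism acts by $-1$ on $H_1$, so your claim that step (3) recovers Moskovich--Ohtsuki is false.

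What kills such classes is IHX relations that return to the same skeleton with a different homology framing, modulo skeleta already known to vanish. On the tetrahedron, two IHX moves modulo digon terms give $w(x_1,x_2,x_3)=w(x_1+x_3,x_2,x_3)$. Together with the rotations, these transvections generate $\SL(3,\Z)$, whose coinvariants on $S^\ell(\Q^3)$ vanish for $\ell>0$.

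The paper packages this as the leg-by-digon and leg-by-triangle lemmas and applies them in a fixed order: bridged skeleta, then skeleta with digons, then the prism, then $K_{3,3}$, whose IHX moves produce only prism terms. Each step uses only vanishing already established, and the paper needs this input, not symmetry alone, for $\Gamma_1$, $\Gamma_3$, $\Gamma_4$ and $K_{3,3}$. Your fallback (``finer relations'', relating an asymmetric core to a more symmetric one) points this way but gives no mechanism, and that mechanism is the real content of the proof.

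\textbf{The second missing piece is bridges.} A skeleton with a bridge is a connected sum. The side with an even number of ordinary legs carries the bridge attachment as an extra leg of a different color. So at four loops you need $\cP_{3,(1,k)}=0$ for even $k$, which is a statement about tensors in $S^1(H^1)\otimes S^k(H^1)$, not about polynomials.
\begin{itemize}
\item Symmetry is far too weak here: on a tetrahedral side, only the order-$4$ stabilizer of the bridged edge survives.
\item The leg-by-triangle argument fails for tensors. For instance, $y_1x_3-y_3x_1$ is invariant under the diagonal transvection yet depends on the first coordinates.
\end{itemize}
The paper handles this case differently. It shows the IHX moves on the tetrahedron generate $\SL(3,\Z)$ acting on $S^{\ell_1}(V)\otimes S^{\ell_2}(V)$, passes to $\SL(3,\Q)$ by Zariski density and reductivity, and notes that the trivial representation does not occur.

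Finally, the filtration in your step (4) is unnecessary. Trees with two or more leaves vanish by AS, and sliding a leg through a vertex is an exact coboundary relation. So the leg space of a fixed skeleton is exactly $S^\ell(H^1(\Gamma))$.
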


Moskovich--Ohtsuki interpreted the $\ell$ legs in a diagram $\tGamma$ with
$g$ loops as a polynomial of degree $\ell$ in $g$ variables.  Likewise, our
proof interprets the legs of a Jacobi diagram $\tGamma \in \cP_{g,\ell}$
as a polynomial $p \in S^\ell(H^1(\Gamma;\Q))$ in the cohomology of the
skeleton $\Gamma$ of $\tGamma$. We extend this interpretation to diagrams
with commuting, colored legs to obtain a vanishing lemma for diagrams
with bridges.  We also use weight polynomials in homology to obtain other
vanishing lemmas.

\acknowledgments

The author would like to thank Dror Bar-Natan, Christine Lescop, Jeff
Marshall-Milne Jean-Baptiste Meilhan, Andrew Putman, Dylan Thurston,
and Karen Vogtmann for useful discussions.

\section{Preliminaries}

Throughout this article, all linear algebra is over the rational numbers
$\Q$ unless otherwise specified. This includes coefficients for homology
and cohomology, so that we write
\[ H_n(X) \defeq H_n(X;\Q), \]
etc.

In some sections, we will use the concepts of the invariant subspace
and coinvariant quotient of a linear action $G \acts V$ of a group $G$
on a vector space $V$:
\begin{align*}
\Inv_G(V) &\defeq \{v \in V \st gv = v,\;\forall g \in G\} \\
    \Coinv_G(V) &\defeq V/\braket{gv-v,\;\forall g \in G}.
\end{align*}
Thus $\Inv_G(V)$ is the largest subspace of $V$ fixed by $G$, while
$\Coinv_G(V)$ is the largest quotient of $V$ fixed by $G$.

\subsection{Jacobi diagrams}

A \emph{Jacobi diagram} is a finite, unitrivalent, topological graph with
a cyclic ordering of the edges incident to each trivalent vertex.  In this
article, we will only consider connected Jacobi diagrams.  The univalent
vertices are called \emph{legs}, and we adopt the convention that a diagram
with legs is denoted $\tGamma$, while a trivalent diagram with no legs
is denoted $\Gamma$.  In particular, if we remove the legs of $\tGamma$,
we obtain its trivalent \emph{skeleton} $\Gamma$.  We will sometimes
consider $\Gamma$ without yet orienting the vertices, since (by the AS
relation below) different choices are equivalent up to sign.

We say that either $\tGamma$ or $\Gamma$ has $g$ \emph{loops} when the
homology space
\[ H_1(\tGamma) \cong H_1(\Gamma) \cong \Q^g \]
is $g$-dimensional.  In this article, we assume that $g > 0$.  Since $\Gamma$
is a topological graph, it can be a circle (when $g=1$). It can also have
self-loops (unigons) or double edges (digons); and it can be a triple edge
(a theta graph).

Our main object of study is the Jacobi space
\[ \cP_{g,\ell} \defeq \cJ_{g,\ell}/\braket{\AS,\IHX}. \]
By definition, it is the vector space $\cJ_{g,\ell}$ of formal linear
combinations of Jacobi diagrams with $g$ loops and $\ell$ legs, quotiented
by the AS (anti-symmetry) and IHX (Jacobi identity) relators:
\begin{eq}{e:asihx} \AS = \begin{tp}[style=basic]
    \draw (0,0) -- (0,1) (-.8,-.8) -- (0,0) -- (.8,-.8); \end{tp} -
\begin{tp}[style=basic] \draw (0,.4) -- (0,1) (1,-.8)
    .. controls (.6,-.6) and (-1,-.2) .. (0,.4);
\draw[style=overcross] (0,.4)
    .. controls (1,-.2) and (-.6,-.6) .. (-1,-.8); \end{tp} \qquad
\IHX = \begin{tp}[style=basic] \draw (-.6,1.1) -- (0,.6) -- (.6,1.1)
    (-.6,-1.1) -- (0,-.6) -- (.6,-1.1) (0,.6) -- (0,-.6); \end{tp} -
\begin{tp}[style=basic] \draw (1.1,-.6) -- (.6,0) -- (1.1,.6)
    (-1.1,-.6) -- (-.6,0) -- (-1.1,.6) (.6,0) -- (-.6,0); \end{tp} +
\begin{tp}[style=basic] \draw (-1.2,-1) -- (.8,1);
    \draw[style=overcross] (-.8,1) -- (1.2,-1);
    \draw (-.6,-.4) -- (.6,-.4); \end{tp} \end{eq}
Jacobi diagrams can thus also be interpreted as Jacobi space elements
$\tGamma \in \cP_{g,\ell}$.  Since the AS relation implies that $\tGamma =
0$ if it has two legs that meet at the same trivalent vertex, we can assume
that the legs of $\tGamma$ are attached to distinct trivalent vertices.
We also abbreviate the Jacobi spaces
\[ \cP_g \defeq \cP_{g,0} \]
of trivalent diagrams, which we will generalize to Jacobi spaces
$\cP_g(V)$ with twisted coefficients in \Sec{ss:twisted}.

A \emph{weight system} is a linear function $w:\cP_{g,\ell} \to \Q$
and thus a vector $w \in \cP_{g,\ell}^*$ in the dual vector space
$\cP_{g,\ell}^*$.  Whereas the Jacobi space $\cP_{g,\ell}$ is defined by
quotienting $\cJ_{g,\ell}$ by the AS and IHX relators, weight systems lie
in a subspace $\cP_{g,\ell}^* \subseteq \cJ_{g,\ell}^*$ of functions on
the set of Jacobi diagrams, namely those that the AS and IHX relations.

We will also use diagrams with colored legs (more precisely, colored socks
on the feet of the legs). For example:
\[ \begin{tp}[style=basic]
\draw (0,0) circle (2);
\foreach \th in {90,210,330} { \draw (0,0) -- (\th:2); }
\foreach \th/\c in {10/medgreen,50/medgreen,135/medred,165/medblue} {
    \draw (\th:2) -- (\th:2.8); \fill[\c] (\th:2.9) circle (.25); }
\end{tp} \]
When the legs are colored, we add a relator CL to make legs commute:
\[ \CL =\; \begin{tp}[style=basic,shift={(0,-.4)}]
\draw (-1.5,0) -- (1.5,0) (-.6,0) -- (-.6,.8) (.6,0) -- (.6,.8);
\fill[medgreen] (-.6,.9) circle (.25);
\fill[medblue] (.6,.9) circle (.25);\end{tp}
\;-\;\begin{tp}[style=basic,shift={(0,-.4)}]
\draw (-1.5,0) -- (1.5,0) (-.6,0) -- (-.6,.8) (.6,0) -- (.6,.8);
\fill[medblue] (-.6,.9) circle (.25);
\fill[medgreen] (.6,.9) circle (.25);\end{tp} \]

Given any integer partition
\[ \vell = (\ell_1,\ell_2,\ldots,\ell_n)
    \qquad \ell = \ell_1 + \ell_2 + \cdots + \ell_n \]
of the total leg number $\ell$, we can define a generalized Jacobi space
$\cP_{g,\vell}$ by giving $\ell_j$ of the legs the $j$th color.  We say
that $\vell$ is odd or even respectively when $\ell$ is odd or even.
More formally, let $\cJ_{g,\vell}$ be the space of linear combinations
of Jacobi diagrams with $g$ loops and legs with color population $\vell$.
We then define
\[ \cP_{g,\vell} \defeq \cJ_{g,\vell}/\braket{\AS,\IHX,\CL}. \]

We will also consider Jacobi diagrams with legs ending in \emph{anchors}
that are numbered and do not commute with each other, nor with
ordinary legs. For example:
\[ \begin{tp}[style=basic]
\draw (-3.25,0) -- (-1.25,0) (0,0) circle (1.25) (1.25,0) -- (3.75,0);
\foreach \th in {45,90,135,270} { \draw (\th:1.25) -- (\th:2.05); }
\draw (2.5,0) -- (2.5,.8);
\draw (-3.05,0) node[left] {\anchvert$_1$}
    (3.8,0) node[right] {\anchvert$_2$};
\end{tp} \]
Except in this example, we will omit the anchor numbers.  The diagrams
with $g$ loops, $a$ distinguished anchors, and $\ell$ legs yield an
\emph{anchored} Jacobi space $\cP_{g,a,\ell}$.  If a Jacobi diagram
$\tGamma$ has anchors, then we include those anchors in its skeleton
$\Gamma$ and only remove the other legs.  We assume that $g+a > 0$ for a
uniform treatment of Jacobi spaces and skeleta.

Of course, we can generalize $\cP_{g,a,\ell}$ to $\cP_{g,a,\vell}$ with
colored legs, and we can restrict to the special case where there are no
anchors after all:
\[ \cP_{g,\vell} = \cP_{g,0,\vell}. \]

As discussed in \Sec{ss:consum}, the point of the anchors is to connect
diagrams to each other.  Thus the AS, IHX, and CL relators above should
technically have anchors.

\begin{remark} Here we indulge in a modest brag concerning terminology.
Google Scholar indicates that the author and Dylan Thurston were the
first to publish the now-standard term ``Jacobi diagram'' \cite{K:paste}.
Our motivation was that terms such as ``Feynman diagram'' or ``graph''
are too general; while ``Chern-Simons Feynman diagram'' is verbose, and
``Chinese character diagram'' is both verbose and a strange metaphor.
Of course, the term ``Jacobi diagram'' refers to the fact that the IHX
relation is also the Jacobi identity in the definition of a Lie algebra.
\end{remark}

\subsection{Twisted coefficients}
\label{ss:twisted}

Let $\Gamma$ be a graph with $g$ loops, and which need not be trivalent
but has no legs.  We can decorate $\Gamma$ with a \emph{homotopy framing},
meaning that we choose a homotopy equivalence
\[ f:\Gamma \stackrel{\sim}{\longto} \Phi_g \]
to a fixed graph $\Phi_g$ with $g$ loops. For definiteness, we let $\Phi_g$
be a bouquet of $g$ circles.  If $\Gamma_1$, $\Gamma_2$, and $\Gamma_3$
are trivalent and participate in an IHX relator \eqref{e:asihx}, then the
relator yields canonical homotopy equivalences $\Gamma_1 \sim \Gamma_2
\sim \Gamma_3$, because all three are canonically homotopy equivalent to
the graph $\Gamma_4$ given in each case by contracting the middle
edge to a point:
\begin{eq}{e:gamma4}
\begin{tp}[style=basic] \draw (-.6,1.1) -- (0,.6) -- (.6,1.1)
    (-.6,-1.1) -- (0,-.6) -- (.6,-1.1) (0,.6) -- (0,-.6);
    \draw (0,-2.5) node {$\Gamma_1$}; \end{tp} \;\sim\;
\begin{tp}[style=basic] \draw (1.1,-.6) -- (.6,0) -- (1.1,.6)
    (-1.1,-.6) -- (-.6,0) -- (-1.1,.6) (.6,0) -- (-.6,0);
    \draw (0,-2.5) node {$\Gamma_2$}; \end{tp} \;\sim\;
\begin{tp}[style=basic] \draw (-1.2,-1) -- (.8,1);
    \draw[style=overcross] (-.8,1) -- (1.2,-1);
    \draw (-.6,-.4) -- (.6,-.4);
    \draw (0,-2.5) node {$\Gamma_3$}; \end{tp} \;\sim\;
\begin{tp}[style=basic] \draw (-1,-1) -- (1,1) (-1,1) -- (1,-1);
    \fill (0,0) circle (.25);
    \draw (0,-2.5) node {$\Gamma_4$}; \end{tp} \end{eq}

Thus the IHX relator makes sense for homotopy-framed Jacobi diagrams.
The AS relator does too, because the two terms are given by the same graph
$\Gamma$ except with opposite cyclic orderings at one vertex.  We will
use this enhancement of the relators to define generalized Jacobi spaces
with local coefficients, in a manner analogous to local coefficients in
algebraic topology.

Recall that for any discrete group $G$, the (unpointed) homotopy automorphism
group of its classifying space $K(G,1)$ is its outer automorphism group:
\[ \hAut(K(G,1)) \cong \Out(G). \]
Recall also that $\Phi_g$ (or any graph with $g$ loops) is a classifying
space of a free group, hence:
\begin{eq}{e:out} \Phi_g \sim K(F_g,1)
    \qquad \hAut(\Phi_g) \cong \Out(F_g). \end{eq}
If $\Out(F_g) \acts V$ is a linear representation of $\Out(F_g)$ (over
$\Q$ as usual for us), then the IHX and AS relators for homotopy-framed
Jacobi diagrams yield a twisted Jacobi space $\cP_g(V)$.  Since each
Jacobi diagram $\Gamma$ is given a homotopy framing $\Gamma \sim \Phi_g$
to define $\cP_g(V)$, we can also describe $V$ as a linear representation
$\hAut(\Gamma) \acts V$.

More formally, let $\cJ_g(V)$ be the vector space of formal linear
combinations of homotopy-framed trivalent diagrams with $g$ loops and
with coefficients in $V$.  There is an action $\Out(F_g) \acts \cJ_g(V)$
in which $\Out(F_g)$ both acts on the framing of any given $\Gamma$ and
acts on $V$ as a representation.  Then
\[ \cP_g(V) \defeq \Coinv_{\Out(F_g)}(\cJ_g(V))/\braket{\AS,\IHX}. \]

\begin{example} Let
\[ V \defeq S^\ell(H^1(\Gamma)) \cong S^\ell(\Q^g) \]
be the $\ell$th symmetric power of the first cohomology of $\Gamma$, which
has a natural action of $\hAut(\Phi_g)$. In this case, \Lem{l:sympow}
below states that the legged Jacobi space $\cP_{g,\ell}$ is isomorphic to
the twisted Jacobi space $\cP_g(S^\ell(H^1(\Gamma)))$.
\end{example}

\begin{remark} Every coefficient space $V$ that we use in this article
is made from the (co)homology of $\Gamma$. Thus we can use a simpler
\emph{homology framing} of $\Gamma$, meaning an integral basis of
$H_1(\Gamma;\Z)$.  We can define any such $V$ as a linear representation
of the general linear group $\GL(g,\Z)$ instead of its extension $\Out(F_g)$.
\end{remark}

We can generalize the model graph $\Phi_g$ to a graph $\Phi_{g,a}$ which is
a bouquet of $g$ loops and a set $A$ of $a \ge 1$ anchored legs; for example:
\[ \Phi_{4,3} =\;\; \begin{tp}[style=basic]
\foreach \th in {0,60,120,180} { \draw (0,0)
    .. controls ({\th-30}:1) and ($(\th:3)-({\th+90}:1.35)$) .. (\th:3)
    .. controls ($(\th:3)+({\th+90}:1.35)$) and (\th+30:1) .. (0,0); }
\foreach \th in {235,270,305} {
\draw (0,0) -- (\th:2.5); \draw (\th:3.15) node {\anchvert}; }
\fill (0,0) circle (.25);
\end{tp} \]
We can likewise generalize \equ{e:out} to the relative homotopy automorphism
group $\hAut(\Phi_{g,a},A)$; by definition the group of homotopy equivalences
that fix $A$ as follows \cite{BF:out}.  For any discrete group $G$, let
$K(G,1)$ be a classifying space, and let $A \subseteq K(G,1)$ be a subset
with $a$ points.  As explained by Bestvina and
Feighn \cite[Sec.~2.5]{BF:out} in the special case $G = F_g$,
we always have:
\[ \hAut(K(G,1),A) \cong \Aut(G) \ltimes G^{a-1}. \]
To state the special case explicitly,
\[ \hAut(\Phi_{g,a},A) \cong \Aut(F_g) \ltimes F_g^{a-1}. \]
A linear representation $\hAut(\Phi_{g,a},A) \acts V$ likewise yields an
anchored Jacobi space $\cP_{g,a}(V)$ with twisted coefficients.  In this
article, this $V$ is always a polynomial or tensor space over the relative
cohomology $H^1(\Phi_{g,a},A)$.

Finally, given a linear representation $\hAut(\Phi_{g,a},A) \acts V$,
we can dually consider weight systems $w \in \cP_g^*(V)$.  By definition,
these are functions on homotopy-framed Jacobi diagrams that take values in
$V$, that satisfy the homotopy-framed AS and IHX relations, and that are
$\hAut(\Phi_{g,a},A)$-invariant.  Note that when we dualize from Jacobi
diagrams to weight systems, we must also dualize the coefficients:
\begin{eq}{e:dual} \cP_g(V)^* \cong \cP_g^*(V^*). \end{eq}
This is analogous to the duality between the homology and cohomology of
a topological space $X$ with coefficients in a flat vector bundle $V$
over any field $F$:
\[ H^n(X;V^*) \cong H_n(X;V)^*. \]
As discussed in the next subsection, it is more than an analogy.

\begin{figure*}[t] \begin{center}
\includegraphics[width=4in]{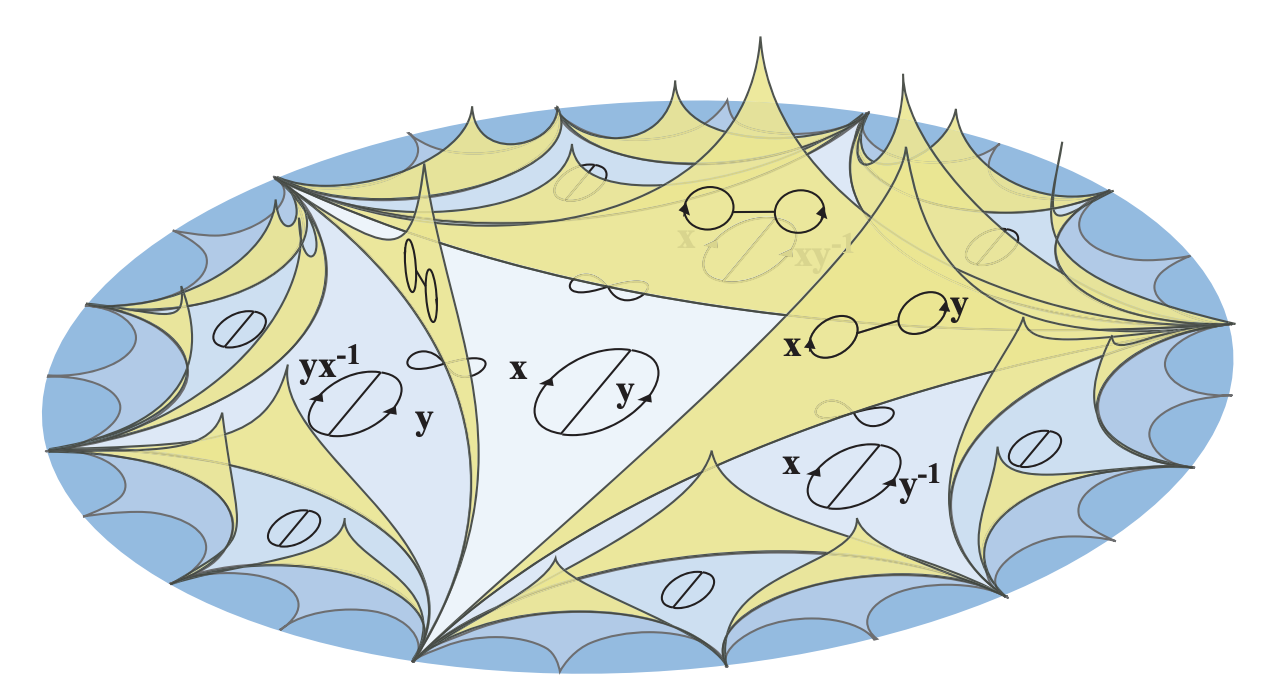}
\end{center}
\caption{Outer space $\cO_2$ in its simplicial closure $\bcO_2$ for two loops
    (figure taken from Vogtmann \cite{Vogtmann:outer}).}
\label{f:outer} \end{figure*}

\subsection{Outer space}
\label{ss:outer}

Although this subsection is optional for our results, we include it
to help interpret Jacobi diagrams and spaces with twisted coefficients.
Homotopy framings were also used by Culler and Vogtmann to make \emph{Outer
space} $\cO_g$ \cite{CV:moduli}; see \Fig{f:outer}.  For each $g \ge
2$, Outer space $\cO_g$ is defined as a moduli space of metric graphs
$\Gamma$ with $g$ loops and a homotopy framing $\Gamma \sim \Phi_g$.
$\cO_g$ also lies in a simplicial complex $\bcO_g$, its \emph{simplicial
closure}, meaning that $\cO_g$ is dense in $\bcO_g$ and is the complement
of a simplicial subcomplex $\cI_g \subseteq \bcO_g$ of ideal points.
Both $\cO_g$ and $\bcO_g$ are also contractible and have a canonical
simplicial action of $\Out(F_g)$.

As established by Conant and Vogtmann \cite[Prop.~27]{CV:kontsevich}, the
Jacobi space $\cP_g$ is isomorphic to a top-dimensional, equivariant,
relative, twisted cohomology of Outer space:
\begin{eq}{e:equiv} \cP_g \cong H^{3g-4}_G(\bcO_g,\cI_g;L).
\end{eq}
It is easier to establish \equ{e:equiv} with a simplification of
Borel equivariant cohomology, namely the homology of the complex of
$\Out(F_g)$-invariant cochains.  Since we are working over $\Q$, and since
$\Out(F_g) \acts \cO_g$ is a properly discontinuous action of a discrete
group, this simplification yields the same answer \cite[Sec.~3.1]{CP:codim}.
The coefficient system $L \cong L^*$ in \equ{e:equiv} is a line bundle on
$\cO_g$ induced by the determinant representation of $\GL(g,\Z)$, extended
to a representation of $\Out(F_g)$.  It doesn't extend as a line bundle
to the relative locus $\cI_g \subseteq \bcO_g$, but it doesn't need to.
(Equivalently, it extends to the zero sheaf on $\cI_g$.)  In the isomorphism
with $\cP_g$, $L$ absorbs the AS relator and orientations of simplices in
simplicial homology.

As a generalization of the Conant--Vogtmann isomorphism, the twisted Jacobi
space $\cP_g(V)$ that we use is isomorphic to a cohomology space twisted
by both $L$ and $V$:
\[ \cP_g(V) \cong H^{3n-4}_{\Out(F_g)}(\bcO_g,\cI_g;L \tensor V). \]

Outer space $\cO_{g,a}$ generalizes to a moduli space $\cO_{g,a}$ of metric
graphs $\Gamma$ with $g$ loops and $a$ anchors, using homotopy equivalences
$\Gamma \sim \Phi_{g,a}$ with the model graph $\Phi_{g,a}$ that extend
the label-preserving bijection of the anchors \cite[Sec.~2.5]{BF:out}.
Given a linear representation $\hAut(\Phi_{g,a},A) \acts V$, we again get
an isomorphism with a top cohomology space:
\[ \cP_{g,a}(V) \cong H^{3n+a-4}_{\hAut(\Phi_{g,a},A)}
    (\bcO_{g,a},\cI_{g,a};L \tensor V). \]
Just as Outer space $\cO_g$ for diagrams with $g$ loops is analogous
to Teichm\"uller space $\cT_g$ for surfaces of genus $g$, Outer space
$\cO_{g,a}$ for anchored diagrams is analogous to Teichm\"uller space
$\cT_{g,a}$ for marked surfaces.

\section{Isomorphisms and maps}
\label{s:isomaps}

\subsection{Legs as twisted coefficients}

For any vector space $V$, let
\[ S^\vell(V) \defeq S^{\ell_1}(V) \tensor S^{\ell_2}(V) \tensor \cdots
    \tensor S^{\ell_n}(V) \]
denote the indicated tensor product of symmetric powers.

\begin{lemma} For every $g$, $\vell$, and $a$, there is a canonical
isomorphism
\[ \cP_{g,a,\vell} \cong \cP_{g,a}(S^\vell(H^1(\Gamma,\dGamma))), \]
where $\dGamma$ is the (possibly empty) set of anchors of $\Gamma$.
\label{l:sympow} \end{lemma}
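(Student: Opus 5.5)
We construct mutually inverse linear maps between the two spaces. Start with a Jacobi diagram $\tGamma$ with skeleton $\Gamma$ (anchors included in $\Gamma$). Each leg is attached at a point $x$ in the interior of some edge $e$ of $\Gamma$. An edge $e$ with a transverse co-orientation determines a cohomology class $[e] \in H^1(\Gamma,\dGamma)$: it is the class of the 1-cochain dual to $e$, or equivalently the class that counts signed crossings of a relative cycle through $x$. The co-orientation comes from the cyclic order at the trivalent vertex where the leg meets $e$: the leg points to one side of $e$. We send $\tGamma$ to the framed skeleton $\Gamma$ with coefficient
\[ \prod_{j}\prod_{\text{legs of color } j} [e_{\text{leg}}] \in S^\vell(H^1(\Gamma,\dGamma)), \]
putting the legs of color $j$ in the $j$th tensor factor. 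A homology framing $\Gamma\sim\Phi_{g,a}$ identifies this space with the fixed representation, and the coinvariant quotient removes the dependence on the choice of framing.

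Next we check that relators go to relators. The AS relation at a leg vertex reverses the co-orientation, so $[e]\mapsto -[e]$, which matches the sign of AS on the skeleton side. AS at a skeleton vertex corresponds directly to AS in $\cP_{g,a}(V)$. An IHX relation whose middle edge lies in the skeleton is a homotopy-framed IHX relation, and the leg classes transport through the canonical equivalences \eqref{e:gamma4}. The essential case is IHX with a leg as one of the four outer edges, or the middle edge ending in a leg. Here IHX says that sliding a leg past a trivalent vertex of $\Gamma$ equals the sum of the two outgoing positions. On the cohomology side this is the cocycle condition: the coboundary of the vertex-dual $0$-cochain gives $[e_1]+[e_2]+[e_3]=0$ with consistent co-orientations. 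The CL relation, and commutation of two legs on the same edge, hold automatically because $S^\vell$ is commutative within each color. Anchors are excluded from the $0$-cochains, which is exactly why relative cohomology appears.

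For the inverse, take a framed trivalent $\Gamma$ together with a monomial. We choose a basis of $H^1(\Gamma,\dGamma)$ made of edge-duals, namely the complement of a spanning forest in which each anchor lies in its own component's root. We then realize each factor as a leg on that edge. The key step is well-definedness: another choice of representing edges differs by vertex coboundaries, and these are killed by the leg-IHX relation above. Framing changes are killed by the coinvariant quotient. Legs on distinct edges commute, and legs on the same edge commute by CL. Since both composites are the identity on generators, the maps are inverse.

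The main obstacle is the leg-IHX and sign bookkeeping. The co-orientations induced by the cyclic orders must make the IHX relation with one leg map exactly to $\delta(\text{vertex})$ with the correct signs, and AS must match sign for sign. A related subtlety is that a legged diagram whose leg vertex lies on a bridge, or on a unigon, still maps consistently; for example, a bridge's dual class is zero, and the corresponding diagram must vanish by IHX. I would fix a local sign convention at one trivalent vertex and verify the three IHX terms explicitly.
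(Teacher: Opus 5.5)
Your proposal is correct and follows essentially the same route as the paper. Legs become edge-dual cochains via the cyclic order at the attachment vertex. AS and leg-IHX match cochain antisymmetry and vertex coboundaries, with anchors excluded from the $0$-cochains, which gives relative cohomology. Colors go into separate symmetric factors, and skeleton relators are matched through the homotopy equivalences \eqref{e:gamma4}.

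The one check you leave implicit is that your inverse kills the twisted IHX relators of $\cP_{g,a}(V)$. This follows from your independence-of-choice argument by taking the spanning forest to contain the middle edge, so that no legs sit on it; that is exactly the paper's step of evacuating the legs from $e$ before applying IHX.
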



This most important special case of \Lem{l:sympow} for us is
\[ \cP_{g,\ell} \cong \cP_g(S^\ell(H^1(\Gamma))), \]
but we will also rely on some other cases.  This special case is a more
abstract version of polynomial calculations in Moskovich--Ohtsuki
\cite{MO:vanishing}.

\begin{proof} We first build a local Jacobi space $\cL_\vell(\Gamma)$
for a single skeleton $\Gamma$, where the IHX and AS relators are only
used to move around the (unanchored) legs.  The edges of $\Gamma$ make it a
1-dimensional, finite cell complex except when $\Gamma$ is a circle, and we
will argue the lemma using cellular homology.  The lemma is straightforward
in the special case that $\Gamma$ is a circle.

We first establish an isomorphism
\begin{eq}{e:cohom} \cL_1(\Gamma) \cong H^1(\Gamma,\dGamma) \end{eq}
by converting a leg to an orientation of its underlying edge using the
cyclic ordering at its attachment vertex, as in this example:
\[ \begin{tp}[style=basic]
\foreach \th in {90,210,330} { \draw (0,0) -- (\th:2); }
\draw (210:2) arc (210:450:2);
\draw (270:2) -- (270:3.5); \draw (270:4.15) node {\anchvert};
\draw[medcyan,ultra thick] (90:2) arc (90:210:2);
\draw[medcyan,ultra thick] (150:2) -- (150:2.8);
\draw[<->] (3.5,-1) -- (6.5,-1); \tikzset{shift={(10,0)}};
\foreach \th in {90,210,330} { \draw (0,0) -- (\th:2); }
\draw (210:2) arc (210:450:2);
\draw (270:2) -- (270:3.5); \draw (270:4.15) node {\anchvert};
\draw[medcyan,ultra thick] (90:2) arc (90:210:2);
\draw[medcyan,ultra thick,->] (150:2) arc (150:145:2); \end{tp} \]
Thus a leg on any one edge becomes a 1-cochain; since $\Gamma$ is
1-dimensional, it is a 1-cocycle.  The AS relator at a leg matches the
antisymmetry of a cochain for a cell, while an IHX relator at a leg has
another participating vertex $v \in \Gamma$ and matches the coboundary
$\delta v$.  Thus \equ{e:cohom} holds because the two sides can be given
matching definitions.

We extend \equ{e:cohom} with a pair of isomorphisms
\begin{eq}{e:local} \cL_\vell(\Gamma) \cong S^\vell(\cL_1(\Gamma))
    \cong S^\vell(H^1(\Gamma,\dGamma)). \end{eq}
The second isomorphism is immediate from \eqref{e:cohom}.  The first
isomorphism is easier to discuss in the special case
$\cL_\ell(\Gamma)$ with uncolored legs.  In this
case, we employ a fact from abstract algebra: If $V$ is a vector
space with subspace $W \subseteq V$, then there is an algebra isomorphism
\[ S^*(V/W) \cong S^*(V)/\braket{W}. \]
In words, if we first quotient $V$ by $W$ as a subspace and then form the
symmetric algebra $S^*(V/W)$, we get the same thing as if we first form
the symmetric algebra $S^*(V)$ and then quotient by the ideal generated
by $W$.  In context, $V$ is the space of formal linear combinations of
one leg attached to $\Gamma$, while $W$ is the subspace spanned by the
AS and IHX relators.  Then $S^*(V)/\braket{W}$ matches the definition
of $\cL_\ell(\Gamma)$, while $S^*(V/W)$ matches the definition of
$S^\ell(\cL_1(\Gamma))$.

We can similarly argue the first isomorphism in \eqref{e:local} in full
generality using tensor algebras instead of symmetric algebras.

To finish the proof, we want to pass from the characterization of
$\cL_\vell(\Gamma)$ to the lemma's characterization of $\cP_{g,a,\vell}$.
To simplify the formulas, we assume that $a=0$; the case with anchors is
argued the same way using relative cohomology.  Consider what happens when
we apply a finite sequence of AS and IHX relators to a diagram $\tGamma$
to either $\cP_{g,\vell}$ or $\cP_g(S^\vell(H^1(\Gamma)))$.  There will be
a subsequence of relators that change the skeleton $\Gamma$, interspersed
with subsequences that only move the legs of $\tGamma$.  As long as only the
legs move, the value $\tGamma \in S^\vell(H^1(\Gamma)))$ does not change.
If we apply an AS relator to the skeleton $\Gamma$, then this also negates
$\tGamma$, which matches the definition of $\cP_g(S^\vell(H^1(\Gamma)))$.
If we apply an IHX relator to $\tGamma$ that changes $\Gamma$, then by
the homotopy equivalences \eqref{e:gamma4}, this matches the corresponding
IHX relator for $\cP_g(S^\vell(H^1(\Gamma)))$.

Finally, there is the more subtle case when we want to apply an IHX relator
at an edge $e \in \Gamma$ as an equivalence in $\cP_g(S^\vell(V))$,
but there are legs on $e$ in $\tGamma$.  Again using \eqref{e:gamma4},
the homotopy framing of each $\Gamma_k$ induces specific isomorphisms
\[ H^1(\Gamma_1) \cong H^1(\Gamma_2)
    \cong H^1(\Gamma_3) \cong H^1(\Gamma_4). \]
If we evacuate the legs of $\tGamma$ from $e \in \Gamma = \Gamma_1$ using
the IHX relator for legs, then we obtain a common element $\tGamma \in
S^\ell(H^1(\Gamma_k))$ for all $k$.  It is the same element regardless
of how the legs are evacuated.  We can then apply an IHX relator in
$\cP_{g,\ell}$ to match the one in $\cP_g(S^\ell(H^1(\Gamma)))$.
\end{proof}

\subsection{Connected sums}
\label{ss:consum}

When two Jacobi diagram $\tGamma_1$ and $\tGamma_2$ each have exactly one
leg with the same color, then we can form a connected sum diagram $\tGamma =
\tGamma_1\consum\tGamma_2$ by attaching those two legs to each other:
\[ \begin{tp}[style=basic,scale=.9]
\draw (0,0) circle (2);
\foreach \th in {60,180,300} { \draw (0,0) -- (\th:2); }
\foreach \th/\c in {100/medblue,140/medblue,240/medgreen,0/medred} {
    \draw (\th:2) -- (\th:2.8); \fill[\c] (\th:2.9) circle (.25); }
\end{tp} \;\;\#\;\; \begin{tp}[style=basic,scale=.9]
\draw (0,0) ellipse (2 and 1.414) (0,1.414) -- (0,-1.414);
\draw (-2,0) -- (-2.8,0); \fill[medred] (-2.9,0) circle (.25);
\draw (0,0) -- (.8,0); \fill[medblue] (.9,0) circle (.25);
\draw ({sqrt(8/3)},{-sqrt(2/3)}) -- ++({.8/sqrt(2)},{-.8/sqrt(2)});
\draw ({sqrt(8/3)},{sqrt(2/3)}) -- ++({.8/sqrt(2)},{.8/sqrt(2)});
\fill[medblue] ({sqrt(8/3)},{-sqrt(2/3)})
    ++({.9/sqrt(2)},{-.9/sqrt(2)}) circle (.25);
\fill[medgreen] ({sqrt(8/3)},{sqrt(2/3)})
    ++({.9/sqrt(2)},{.9/sqrt(2)}) circle (.25);
\end{tp} \;\;=\;\; \begin{tp}[style=basic,scale=.9]
\draw (0,0) circle (2);
\foreach \th in {60,180,300} { \draw (0,0) -- (\th:2); }
\foreach \th/\c in {100/medblue,140/medblue,240/medgreen} {
    \draw (\th:2) -- (\th:2.8); \fill[\c] (\th:2.9) circle (.25); }
\draw (2,0) -- (3,0);
\tikzset{shift={(5,0)}};
\draw (0,0) ellipse (2 and 1.414) (0,1.414) -- (0,-1.414);
\draw (0,0) -- (.8,0); \fill[medblue] (.9,0) circle (.25);
\draw ({sqrt(8/3)},{-sqrt(2/3)}) -- ++({.8/sqrt(2)},{-.8/sqrt(2)});
\draw ({sqrt(8/3)},{sqrt(2/3)}) -- ++({.8/sqrt(2)},{.8/sqrt(2)});
\fill[medblue] ({sqrt(8/3)},{-sqrt(2/3)})
    ++({.9/sqrt(2)},{-.9/sqrt(2)}) circle (.25);
\fill[medgreen] ({sqrt(8/3)},{sqrt(2/3)})
    ++({.9/sqrt(2)},{.9/sqrt(2)}) circle (.25); \end{tp} \]

\begin{lemma} Connected sum yields a bilinear map
\[ \#:\cP_{g,(1,\vell)} \times \cP_{h,(1,\vk)}
    \to \cP_{g+h,\vell+\vk}. \]
\label{l:consum} \end{lemma}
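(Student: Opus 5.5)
The plan is to define $\#$ first on the level of formal diagrams, as a bilinear map
\[ \cJ_{g,(1,\vell)} \times \cJ_{h,(1,\vk)} \to \cJ_{g+h,\vell+\vk}, \]
and then to check that it carries the relators in either factor into the relators of the target. Concretely, take diagrams $\tGamma_1$ and $\tGamma_2$, each with a unique leg of the first color. Let $\tGamma_1\consum\tGamma_2$ be the unitrivalent graph obtained by deleting the two univalent endpoints of those legs and gluing the two dangling half-edges into a single edge $b$, which becomes a bridge. The cyclic orderings at all trivalent vertices are inherited, so the result is a legitimate Jacobi diagram.

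The bookkeeping on loops and legs is immediate. The result is connected, and since $b$ is a bridge,
\[ H_1(\tGamma_1\consum\tGamma_2) \cong H_1(\tGamma_1)\oplus H_1(\tGamma_2), \]
so the loop number is $g+h$. The remaining legs keep their colors, so the color population is $\vell+\vk$. We extend bilinearly to formal linear combinations.

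Next we show the map descends to the quotients. By bilinearity it suffices to fix a diagram $\tGamma_2$ and show that $R\consum\tGamma_2$ lies in $\braket{\AS,\IHX,\CL}$ for every relator $R$ in the first factor; the other factor is symmetric.

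\begin{itemize}
\item An AS relator is supported at one trivalent vertex, and an IHX relator at one internal edge together with its four incident half-edges. A CL relator is supported at two adjacent legs on an edge. Each relator is therefore a local modification inside a small neighborhood $N$ of the diagram, with everything outside $N$ held fixed.
\item If the special first-color leg lies outside $N$, then gluing on $\tGamma_2$ commutes with the modification. In that case $R\consum\tGamma_2$ is literally the same type of relator in the glued diagram.
\item The special leg may itself be one of the outer half-edges participating in an AS or IHX relator. In $R\consum\tGamma_2$ that half-edge simply continues along the bridge $b$ into $\tGamma_2$. Since AS and IHX only use the local picture at the half-edges and never require them to end at univalent vertices, the result is still an AS or IHX relator.
\item A CL relator involving the special leg would swap it with a leg of another color. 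After gluing, this becomes an IHX-type move in which the other leg slides past the bridge attachment point. Such a move is not obviously a relator, so this case needs care.
\end{itemize}

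The last case is the main obstacle: we must check that commuting a leg past the attachment point of the bridge is a consequence of the relations in the glued diagram. My first attempt is to argue that CL involving the special leg is never needed, because it has a unique color. More precisely, I would check that the definition of $\cP_{g,(1,\vell)}$ does not generate anything new from such swaps, perhaps by reading CL only between legs attached to the same edge. If that fails, I would instead use \Lem{l:sympow}. Swapping the special leg with an adjacent leg $x$ changes the diagram by an IHX relator at that edge, which produces a term in which $x$ and the special leg share a vertex, a tree-like fork. After gluing, that term is a diagram where $x$ is attached at the bridge's end vertex, and it should be shown equal, via IHX on the bridge, to the difference of placements of $x$ on the two sides. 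This reduces to the statement that in $H^1$, the cochain given by a leg on a bridge is zero, since a bridge carries no cohomology.

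Finally, with the map descended to $\cP_{g,(1,\vell)}\times\cP_{h,(1,\vk)}$, bilinearity is inherited from the formal level, which completes the plan.
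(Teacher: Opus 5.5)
Your fallback argument is correct and is essentially the paper's proof. Relators away from the glued leg pass through unchanged. For a CL swap of the glued leg with a neighbor $x$, the paper notes that the two positions of $x$ on either side of the attachment vertex are cohomologous in the glued skeleton; this is exactly your observation, via \Lem{l:sympow}, that their difference (up to IHX, a leg on the bridge) represents the zero class in $H^1$.

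Drop the ``first attempt'', though. CL has content only between differently colored legs, so every swap of the unique first-color leg with an adjacent leg is a genuine relator of $\cP_{g,(1,\vell)}$ and must be handled as in your fallback.
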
 \eatline \eatline

\begin{proof} Applying an AS or IHX relator to either $\tGamma_1$ or
$\tGamma_2$ commutes with the formation of $\tGamma = \tGamma_1 \consum
\tGamma_2$; this part of the lemma is elementary.  The same remark holds if
we swap two adjacent, colored legs in either $\tGamma_1$ or $\tGamma_2$,
other than the one used in the connected sum.

The remaining question is the CL relator, \ie, what happens if we swap the
leg used in the connected sum with an adjacent leg in either $\tGamma_1$
or $\tGamma_2$.  We claim that $\tGamma_1$ as a summand of $\tGamma$
commutes with an adjacent leg $e$ of $\tGamma_2$ (or vice versa), as in
the following example:
\[ \begin{tp}[style=basic,shift={(0,-1.5)}]
\draw (0,0) circle (2);
\foreach \th in {60,180,300} { \draw (0,0) -- (\th:2); }
\draw (120:2) -- (120:3.4) (120:4.2) circle (.8);
\foreach \th/\c in {50/medgreen,120/medred,190/medblue} {
    \draw (120:4.2)++(\th:.8) -- ++(\th:.8);
    \fill[\c] (120:4.2)++(\th:1.7) circle (.25); }
\foreach \th/\c in {150/medblue,0/medred} {
    \draw (\th:2) -- (\th:2.8); \fill[\c] (\th:2.9) circle (.25); }
\draw (120:4.2)++(-3,0) node {$\tGamma_1$} (-3.7,0) node {$\tGamma_2$};
\draw (150:1.5) node {$e$};
\end{tp} \;\;\;= \begin{tp}[style=basic,shift={(0,-1.5)}]
\draw (0,0) circle (2);
\foreach \th in {60,180,300} { \draw (0,0) -- (\th:2); }
\draw (120:2) -- (120:3.4) (120:4.2) circle (.8);
\foreach \th/\c in {50/medgreen,120/medred,190/medblue} {
    \draw (120:4.2)++(\th:.8) -- ++(\th:.8);
    \fill[\c] (120:4.2)++(\th:1.7) circle (.25); }
\foreach \th/\c in {90/medblue,0/medred} {
    \draw (\th:2) -- (\th:2.8); \fill[\c] (\th:2.9) circle (.25); }
\draw (120:4.2)++(-3,0) node {$\tGamma_1$} (-3.7,0) node {$\tGamma_2$};
\draw (90:1.5) node {$e$}; \end{tp} \]
As the example illustrates, the two positions for $e$ are cohomologous in
the skeleton $\Gamma$. Thus the summand $\tGamma_1$ and the leg $e$ commute.
\end{proof}

We will also rely on an analogous connected sum operation with anchors.
If $\tGamma_1$ and $\tGamma_2$ have anchors, then we can make a connected
sum $\tGamma_1 \;\#_b\; \tGamma_2$ for any partial bijection $b$ between
their anchors.  \Lem{l:consum} also holds for this type of connected sum,
except that it is elementary because anchors do not have a CL relator.

\subsection{Homology}
\label{ss:homology}

Let $\Gamma$ be a trivalent diagram.  Since we are working over $\Q$,
and since $\Gamma$ is a finite graph, the universal coefficient theorem yields
a canonical duality between polynomials in homology and polynomials in
cohomology:
\[ S^\ell(H_1(\Gamma)) \cong S^\ell(H^1(\Gamma))^*. \]
By \equ{e:dual}, we can describe a weight system $w \in \cP_{g,\ell}^*$
as a \emph{weight polynomial} $w_\Gamma$ for each skeleton $\Gamma$,
using the isomorphism
\[ \cP_{g,\ell}^* \cong \cP_g^*(S^\ell(H_1(\Gamma))). \]
We can say either that $w_\Gamma$ is a polynomial in the homology
$H_1(\Gamma)$, or a polynomial function on the cohomology $H^1(\Gamma)$.
Each weight polynomial $w_\Gamma$ must transform to itself under
automorphisms of $\Gamma$, in some cases with a sign correction given by
the AS relation.  In addition, the polynomials $w_\Gamma$ must satisfy
the IHX relation.

To help interpret calculations, it is useful to use dual bases.
For each $\Gamma$, we can choose a spanning tree $T \subseteq \Gamma$;
and for each edge $e_k \in \Gamma \sm T$, we can choose an orientation.
Then these oriented edges represent a basis $\{e_k\} \subseteq H^1(\Gamma)$.
Each $e_k$ also completes uniquely to a cycle $x_k$ using edges in $T$;
and these cycles represent the dual basis $\{x_k\} \subseteq H_1(\Gamma)$.
Each pair of monomials
\[ e_{k_1} e_{k_2} \cdots e_{k_\ell} \in S^\ell(H^1(\Gamma)) \qquad
x_{k_1} x_{k_2} \cdots x_{k_\ell} \in S^\ell(H_1(\Gamma)) \]
are then also dual up to a multinomial coefficient factor.

\begin{example} If $\Gamma$ is a tetrahedron, then we can let the spanning
tree $T$ be the three edges in the middle as indicated in yellow, and form
the indicated dual bases of $H_1(\Gamma)$ and $H^1(\Gamma)$:
\begin{eq}{e:tetra} \Gamma =\begin{tp}[style=basic,very thick]
\foreach \th in {90,210,330} {
    \draw[medyel, ultra thick] (0,0) -- (\th:3.45); }
\draw (0,0) circle (3.45);
\foreach \th in {90,210,330} {
    \draw[medcyan,->,rounded corners=3pt] ({\th+65}:3)
        arc ({\th+65}:{\th+120-atan(.15)}:3) -- ({\th+60}:{.9/sqrt(3)})
        -- ({\th+atan(.15)}:3) arc ({\th+atan(.15)}:{\th+55}:3); }
\foreach \th in {90,210,330} {
    \draw[medmag,->] ({\th+25}:3.9) arc ({\th+25}:{\th+95}:3.9); }
\foreach \th/\n in {150/1,270/3,30/2} {
    \draw (\th:1.95) node {$x_\n$} (\th:4.7) node {$e_\n$}; }
\end{tp} \end{eq}
Each element $f \in S_4$ of the automorphism group of the tetrahedron
graph $\Gamma$ reverses an even number of vertices.  It also permutes the
4 elements
\[x_1, x_2, x_3, x_4 \in H_1(\Gamma) \qquad x_4 \defeq -x_1-x_2-x_3,\]
together with a scalar action of its sign $(-1)^f$ on $H_1(\Gamma)$.
If a weight polynomial $w_\Gamma(\vx)$ participates in a weight system $w
\in \cP_{4,\ell}$, then it must be invariant under this $S_4$ action.

For example, the degree $\ell=4$ polynomial
\[ w_\Gamma(\vx) = -x_1x_2x_3x_4
    = x_1^2x_2x_3 + x_1x_2^2x_3 + x_1x_2x_3^2 \]
has the required symmetry.  By a derivation of Moskovich and Ohtsuki
\cite[Sec.~3.4]{MO:vanishing}, this $w_\Gamma$ (or any $S_4$-invariant
polynomial with $\ell$ even) extends uniquely to a valid weight system $w
\in \cP_{3,\ell}^*$ given by a polynomial for each skeleton $\Gamma$.
\end{example}

\section{Vanishing lemmas}

In this subsection, we establish several vanishing lemmas for Jacobi
diagrams.  Taken together, the lemmas yield a proof of \Thm{th:odd4}
with little remaining calculation.

\begin{lemma}[Symmetry vanishing] Let:
\begin{itemize}
\item $\tGamma$ be a Jacobi diagram with $g$ loops and an odd leg color
population $\vell$.
\item $f$ be an involution of the skeleton $\Gamma$ that fixes $n$
vertices while reversing their orientation.
\item $V \subseteq H^1(\Gamma)$ be the eigenspace of $f$ with
eigenvalue $(-1)^n$.
\end{itemize}
If the natural map
\[ S^\vell(H^1(\Gamma)) \cong \cL_\vell(\Gamma) \to \cP_{g,\vell} \]
factors as
\[ S^\vell(H^1(\Gamma)) \onto S^\vell(H^1(\Gamma)/V) \to \cP_{g,\vell}, \]
then it vanishes identically, \ie, every $\tGamma = 0 \in \cP_{g,\vell}$.
\label{l:symvan} \end{lemma}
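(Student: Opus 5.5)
The plan is to compare the action of $f$ on a diagram in two ways: as an automorphism of the skeleton, and through its action on $H^1(\Gamma)$. Work with the local Jacobi space $\cL_\vell(\Gamma) \cong S^\vell(H^1(\Gamma))$ from \Lem{l:sympow} and the natural map $\phi:S^\vell(H^1(\Gamma)) \to \cP_{g,\vell}$. Since $f$ is an automorphism of $\Gamma$, relabeling a diagram by $f$ gives the same abstract Jacobi diagram. The cyclic orientations agree except at the $n$ vertices whose orientation $f$ reverses, so by AS we obtain
\[ \phi(f^* p) = (-1)^n \phi(p) \qquad \text{for all } p \in S^\vell(H^1(\Gamma)), \]
where $f^*$ is the induced action on $S^\vell(H^1(\Gamma))$. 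Colors are preserved because $f$ only moves skeleton positions and legs keep their colors. One subtlety must be checked here: the identification \eqref{e:cohom} of a leg with a cochain uses the cyclic order at the attachment vertex. Hence pulling back a leg along $f$ agrees with $f^*$ on cohomology only after the AS signs at reversed attachment vertices are tracked. I would arrange that these signs are absorbed consistently into the cochain convention, so that the only net sign is the $(-1)^n$ coming from the skeleton vertices.

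Next, use the factorization hypothesis. Since $f$ is an involution, $H^1(\Gamma)$ splits over $\Q$ as $V \oplus V'$, where $V$ is the $(-1)^n$-eigenspace and $V'$ is the $-(-1)^n$-eigenspace. The quotient $H^1(\Gamma)/V$ is identified with $V'$, on which $f$ acts by $-(-1)^n$. Because $\phi$ factors through $S^\vell(H^1(\Gamma)/V) \cong S^\vell(V')$, it suffices to evaluate $\phi$ on $q \in S^\vell(V')$. On such $q$, $f^*$ acts by $(-(-1)^n)^\ell$, where $\ell=\sum \ell_j$ is odd. Thus $f^*q = -(-1)^n q$. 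Comparing with the displayed identity gives $(-1)^n\phi(q) = \phi(f^*q) = -(-1)^n\phi(q)$, so $\phi(q)=0$ over $\Q$. Since $S^\vell(H^1(\Gamma))$ surjects onto $S^\vell(V')$ and $\phi$ factors through that surjection, $\phi$ vanishes identically. Every $\tGamma$ with skeleton $\Gamma$ lies in the image of $\phi$, so $\tGamma = 0$.

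The main obstacle is the first step: getting the sign of $\phi\circ f^* = (-1)^n\phi$ exactly right. In particular, one has to confirm that the eigenvalue convention $(-1)^n$ in the definition of $V$ matches the orientation-reversal count, and that leg orientations do not contribute extra signs. I would verify this on the $\Theta$ graph with $f$ swapping its two vertices ($n=0$) or reflecting it ($n=2$), and on a tetrahedron reflection, before writing the general bookkeeping.
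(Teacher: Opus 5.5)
Your proposal is correct and is essentially the paper's own argument: the paper likewise observes that $f$ acts on $S^\vell(H^1(\Gamma)/V)$ by the scalar $(-1)^{n+1}$ because $\vell$ is odd. It then chooses $f$-invariant orientations at the non-fixed vertices, so that $f$ reverses exactly $n$ vertex orientations, and concludes $\tGamma=-\tGamma=0$. The sign issue you flag is harmless: the leg-to-cochain identification is defined from the cyclic order at the attachment vertex, so $f$ transports it naturally, and non-fixed vertices come in swapped pairs that contribute an even number of reversals under any orientation choice.
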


Although we opted to state all of our vanishing lemmas in terms of Jacobi
diagrams, we will apply \Lem{l:symvan} dually to weight polynomials.
Given $\Gamma$, let $W \subseteq H_1(\Gamma)$ be the homology eigenspace
of the involution $f$ with eigenvalue $(-1)^{n+1}$.  If every $w \in
\cP_{g,\vell}$ restricts to a weight polynomial
\[ w_\Gamma \in S^\ell(W) \subseteq S^\ell(H_1(\Gamma)), \]
then (by symmetry) every such $w_\Gamma$ vanishes.

\begin{proof} The automorphism $f$ also acts by the scalar $(-1)^{n+1}$ on
the quotient $H^1(\Gamma)/V$.  Since $\vell$ is odd, $f$ does the same to
$S^\vell(H^1(\Gamma)/V)$.  We can choose $f$-invariant orientations of the
vertices of $\Gamma$ that are not fixed by $f$.  Since $f$ then reverses $n$
vertex orientations, it negates $\tGamma$ as a Jacobi diagram, so we obtain
\[ \tGamma = -\tGamma = 0 \in \cP_{g,\vell}. \qedhere \]
\end{proof}

\begin{corollary} $\cP_{g,\vell} = 0$ when $\vell$ is odd
and $g \le 2$.
\label{c:odd12} \end{corollary}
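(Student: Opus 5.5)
We apply \Lem{l:symvan} to a single skeleton at a time. Fix $\Gamma$ with $g\le 2$ loops. Because every Jacobi diagram $\tGamma\in\cP_{g,\vell}$ is the image of some element of $\cL_\vell(\Gamma)\cong S^\vell(H^1(\Gamma))$ for its skeleton $\Gamma$, it is enough to show that for each trivalent skeleton with $g\le 2$ the natural map $S^\vell(H^1(\Gamma))\to\cP_{g,\vell}$ vanishes. To do this we look for an involution $f$ of $\Gamma$ for which the eigenspace $V$ with eigenvalue $(-1)^n$ is zero. Then the required factorization through $S^\vell(H^1(\Gamma)/V)=S^\vell(H^1(\Gamma))$ is trivially satisfied, and the lemma applies. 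Concretely, we want $f$ to act on $H^1(\Gamma)$ as the scalar $(-1)^{n+1}$, where $n$ is the number of fixed vertices whose orientation $f$ reverses.

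The skeleta form a short list.

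For $g=1$, the skeleton is a circle. Since there are no trivalent vertices, we treat this case directly: the reflection of the circle has $n=0$ and acts by $-1$ on $H^1\cong\Q$, which gives exactly $(-1)^{n+1}$. Alternatively, we can use the remark in the proof of \Lem{l:sympow} that the circle case is straightforward. The sign argument is the same in either form: reflection negates the odd-degree polynomial and leaves the (vertexless) diagram unchanged, so $\tGamma=-\tGamma$.

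For $g=2$, the trivalent skeleta with two vertices are the theta graph and the dumbbell (two unigons joined by an edge).
\begin{itemize}
\item \emph{Theta graph.} Take $f$ to be the involution that swaps the two vertices and reverses all three edges. It fixes no vertex, so $n=0$. On $H^1$ it acts by $-1$: every cycle is a difference of two edges, each of which is reversed, so $f$ acts on $H_1$ by $-1$. Hence $V=0$ for eigenvalue $(-1)^0=1$, as required.
\item \emph{Dumbbell.} Take $f$ to be the involution that fixes both vertices and reverses the orientation of each unigon. It acts by $-1$ on $H_1\cong\Q^2$. At each vertex it swaps the two half-edges of the unigon while fixing the bridge half-edge, so it reverses the cyclic order there; thus $n=2$. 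We then need the eigenvalue $(-1)^{n+1}=-1$ on $H^1$, which holds.
\end{itemize}
Finally, legs may sit on arbitrary edges, but because $\cL_\vell(\Gamma)$ depends only on the skeleton, the argument covers all diagrams.

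The main point needing care is the sign bookkeeping in the dumbbell case. We must verify that reversing a unigon genuinely reverses the cyclic orientation at its vertex, and that this sign is consistent with the AS convention used in \Lem{l:symvan}. Equivalently, we check that the combined action (vertex sign)$\cdot$(odd polynomial sign) equals $-1$. Since $(-1)^2\cdot(-1)^{\ell}=-1$ for $\ell$ odd, the check comes out correctly.
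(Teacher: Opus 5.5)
Your proof is correct and follows the paper's intended route: the corollary is the $V=0$ special case of \Lem{l:symvan}, and you make explicit the involutions the paper leaves to the reader, namely the reflection of the circle, the vertex-swapping half-turn of the theta graph with $n=0$, and the double unigon flip of the dumbbell with $n=2$. Your sign checks in each case are right, including for the dumbbell, where the bridge cochain is a coboundary so that $f$ acts by $-1$ on $H^1$.
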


\Cor{c:odd12} is elementary; it doesn't even need the IHX relator, only
AS and CL.  Other special cases of \Lem{l:symvan} with $V = 0$ are also
fairly elementary.  However, we will also use \Lem{l:symvan} when $V$ is
only part of $H^1(\Gamma)$, after moving the legs of $\tGamma$ into $V$
using other arguments.

The next lemmas are about Jacobi diagrams that have a bridge or a Y-bridge,
as in these examples:
\[ \begin{tp}[style=basic]
\draw (0,0) circle (1);
\foreach \th in {72,144,...,288} { \draw (\th:1) -- (\th:1.8); }
\draw (1,0) -- (2.5,0);
\tikzset{shift={(4.5,0)}};
\draw (0,0) ellipse (2 and 1.414) (0,-1.414) -- (0,1.414);
\draw (0,0) -- (.8,0);
\draw ({sqrt(8/3)},{-sqrt(2/3)}) -- ++({.8/sqrt(2)},{-.8/sqrt(2)});
\draw ({sqrt(8/3)},{sqrt(2/3)}) -- ++({.8/sqrt(2)},{.8/sqrt(2)});
\tikzset{shift={(9,0)}};
\foreach \th in {0,120,240} { \draw (0,0) -- (\th:2) (\th:3) circle (1); }
\foreach \th in {-120,-60,...,120} { \draw (0:3)++(\th:1) -- ++(\th:.8); }
\foreach \th in {30,120,210} { \draw (120:3)++(\th:1) -- ++(\th:.8); }
\draw (240:4) -- (240:4.8);
\end{tp} \]

\begin{lemma}[Bridge vanishing] Jacobi diagrams with bridges vanish in
the following two cases.  In both cases, let $g \ge 1$ be a loop number,
let $\vell$ be a leg color population, and let $\tGamma \in \cP_{g,\vell}$
be a Jacobi diagram.
\begin{enumerate}
\item If $\tGamma$ has a Y-bridge, then it vanishes for any $g$ and $\vell$.
\item Suppose that $\cP_{h,(1,\vk)} = 0$ whenever $h < g$, $\vk$ is
even, and $\vk \preceq \vell$ in the sense of inclusion of multisets.
Then $\tGamma$ vanishes if it has a bridge and $\vell$ is odd.
\end{enumerate}
\label{l:bridge} \end{lemma}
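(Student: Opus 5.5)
The plan is to reduce both parts to \Lem{l:consum} and to the local isomorphism \eqref{e:cohom}. The key observation is that a leg attached to a bridge edge $e$ of a skeleton represents the zero class in $H^1$. Under \eqref{e:cohom} such a leg is the cochain dual to $e$. When $e$ is a bridge, this cochain equals $\pm\delta$ of the indicator function of the vertices on one side of $e$, so it is a coboundary. By \eqref{e:local}, any diagram with a leg on a bridge of its skeleton therefore already vanishes in $\cL_\vell(\Gamma)$, and hence in the Jacobi space.

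\textbf{Part 1.} Let $v$ be the central vertex of the Y-bridge, with its three bridges leading to pieces $A$, $B$, $C$. I would give $A$, together with its bridge edge, the role of a single leg of a fresh color $c$ that appears nowhere else. Concretely:
\begin{itemize}
\item Let $\tGamma_A$ be the diagram consisting of $A$, its bridge, and a leg of color $c$ at the end of that bridge.
\item Let $\tGamma'$ be the diagram consisting of $B$, $C$, the path $B$--$v$--$C$, and a leg of color $c$ at $v$.
\end{itemize}
Then $\tGamma = \tGamma_A \consum \tGamma'$, and the diagrams and the sum live in the spaces required by \Lem{l:consum}. In $\tGamma'$ the path from $B$ to $C$ through $v$ is still a bridge of its skeleton, and the $c$-colored leg sits on it. By the observation above, $\tGamma' = 0$. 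Bilinearity in \Lem{l:consum} then gives $\tGamma = 0$. Since this uses no parity or induction hypothesis, it holds for all $g$ and $\vell$.

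\textbf{Part 2.} Let $e$ be a bridge of $\Gamma$. If any leg of $\tGamma$ lies on $e$, then $\tGamma=0$ by the observation. Otherwise, cut $e$ in the middle and give both halves a fresh color $c$. This writes $\tGamma = \tGamma_1 \consum \tGamma_2$ with $\tGamma_i \in \cP_{h_i,(1,\vk_i)}$ and $\vk_1+\vk_2 = \vell$. Each side of a bridge in a trivalent skeleton carries at least one loop, so $h_1,h_2 \ge 1$. Together with $h_1+h_2 = g$, this gives $h_i < g$. Because $\vell$ is odd, one of $\vk_1,\vk_2$ is even, say $\vk_1$. Also $\vk_1 \preceq \vell$. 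The hypothesis then gives $\cP_{h_1,(1,\vk_1)}=0$, so $\tGamma_1 = 0$, and $\tGamma = 0$ by \Lem{l:consum}.

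\textbf{Main obstacle.} The step I expect to need the most care is justifying that the connected-sum decomposition is legitimate, that is, that cutting $\tGamma$ at $e$ with the fresh color $c$ really lands in $\cP_{h,(1,\vk)}$ and is compatible with \Lem{l:consum}. The point is that the leg of color $c$ is subject to the CL relator. In the proof of \Lem{l:consum}, CL was verified precisely by sliding a summand past adjacent legs via cohomologous positions. That argument applies verbatim here, so I would simply invoke it.
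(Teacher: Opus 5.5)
Your proposal is correct and follows the paper's proof. In both parts you write $\tGamma$ as a connected sum at the bridge and apply \Lem{l:consum}: in part 1 the factor whose connecting leg sits on a bridge is null-cohomologous, and in part 2 the factor with even $\vk$ vanishes by hypothesis. Your extra details just make explicit steps the paper leaves implicit: the coboundary $\pm\delta$ of an indicator function, the separate treatment of legs lying on $e$, and the check that $h_1,h_2\ge 1$.
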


\begin{proof} In part 1, we can decompose $\tGamma$ as $\tGamma =
\tGamma_1\consum\tGamma_2$, where $\tGamma_1$ attaches to $\tGamma_2$
at a leg $y$ on a bridge of $\Gamma$.  Since $y$ is null-homologous,
$\tGamma_2$ vanishes in its Jacobi space, thus $\tGamma$ also vanishes.

In part 2, $\tGamma = \tGamma_1\consum\tGamma_2$ with an odd total number
of legs in exactly one of $\tGamma_1$ or $\tGamma_2$, without loss of
generality $\tGamma_1$.  Then by hypothesis, $\tGamma_1 \in \cP_{h,(1,\vk)}
= 0$ vanishes.

\end{proof}

The final two lemmas assert that under certain hypotheses that will be
provided by other vanishing lemmas, $\tGamma$ vanishes when it has a leg
next to a digon or a triangle (and the digon or triangle might also have
legs on them).

\begin{lemma}[Leg-by-digon vanishing] Suppose that for some fixed $\ell$
and $g$, a Jacobi diagram $\tGamma \in \cP_{g,\ell}$ vanishes whenever
it has a bridge.  Then $\tGamma \in \cP_{g,\ell}$ also vanishes if it has
a leg next to a digon (possibly with its own legs).
\label{l:lbd} \end{lemma}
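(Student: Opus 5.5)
Write the local picture as follows: there is a digon with vertices $u,v$ joined by two parallel edges $a,b$ (each possibly carrying legs). The outer edge at $u$ carries a leg $y$ attached at a vertex $w$, and the outer edge at $v$ leads to the rest of the diagram. The plan is to apply the IHX relator to the edge of the skeleton between $w$ and $u$, i.e. the edge joining the leg's vertex to the digon. Before doing that, we use \Lem{l:sympow}: since legs may be moved along cohomologous positions, we may first evacuate any legs sitting on that connecting edge. The only effect is to replace $\tGamma$ by a linear combination of diagrams with the same skeleton, each of which still has a leg next to the digon. This evacuation is possible because the legs on the connecting edge can be pushed onto the digon edges $a,b$ or onto the far side using IHX at a leg. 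The case of a bridge edge is harmless, since a leg on a bridge vanishes anyway.

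Now apply IHX at the edge $wu$. Its endpoints $w$ and $u$ have outer half-edges: the leg $y$ and the incoming edge at $w$, and the digon edges $a$ and $b$ at $u$. The I term is $\tGamma$ itself. In the H and X terms, $y$ is paired with one of $a,b$ at a new vertex, and the other digon edge is paired with the incoming edge. In each of these two terms the former digon becomes a triangle through which the leg $y$ is attached, or more precisely, the local picture becomes a digon-like cycle where $y$ sits on the cycle. Hence in both the H and X terms the leg $y$ now lies on the 2-edge cycle formed by $a$ (or $b$) together with the new edge, rather than beside it. By antisymmetry under the automorphism swapping $a$ and $b$, together with AS, the H and X terms are identified up to sign, giving a relation of the form $\tGamma = \pm 2\,\tGamma'$ or $\tGamma = 0$ directly. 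Here $\tGamma'$ is the diagram with $y$ sitting on the digon.

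The next step is to remove $y$ from the digon. A leg on a digon edge is, by \equ{e:cohom}, the cohomology class dual to that digon cycle, and in $\cL(\Gamma)$ it equals its negative on the other digon edge. IHX applied at a digon vertex with a leg present then expresses $\tGamma'$ as a combination of diagrams in which the leg is pushed to the outer edge at $v$, together with a term where the two digon edges are detached, producing a bridge. The bridge terms vanish by hypothesis. What remains is again a diagram with a leg next to the digon, now on the other side. Combining this with the first relation should give an equation $\tGamma = c\,\tGamma$ with $c \ne 1$, forcing $\tGamma = 0$.

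I expect the main obstacle to be the sign and coefficient bookkeeping in these two IHX steps, together with the legs already living on the digon. Those legs are polynomial factors in the digon's cohomology class, so the relations are really identities in $S^\ell(H^1(\Gamma))$, and one must check that the resulting scalar $c$ is never $1$. To manage this, I would first work in the weight-polynomial picture of \Sec{ss:homology}. There, a leg next to the digon is a linear form on $H_1$ that vanishes on the digon class $d$. The IHX relation at the connecting edge relates $w_\Gamma$ restricted accordingly, and I would then verify the nonvanishing of $1-c$ by computing with a monomial $d^k$ times the remaining factors.
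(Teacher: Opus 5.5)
Your proposal has a genuine gap: apart from one misdescribed step, every move you use is a leg move, and leg moves cannot prove this lemma.

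\textbf{The leg moves carry no information.} The IHX relator at the edge $wu$ is exactly the coboundary relation $\delta u$ that \Lem{l:sympow} already builds into $\cL_\ell(\Gamma)\cong S^\ell(H^1(\Gamma))$. If you orient $a$ and $b$ from $u$ to $v$, it just says $e_{\mathrm{out}} = e_a + e_b$. This causes several problems.
\begin{itemize}
\item Your identification of the H and X terms fails in general. Suppose $a$ and $b$ carry $m$ and $n$ legs, and let $Q$ be the contribution of the other legs. The two terms are $e_a^{m+1}e_b^nQ$ and $e_a^me_b^{n+1}Q$, but the swap $a\leftrightarrow b$ only relates the first one to $e_b^{m+1}e_a^nQ$.
\item Your claim that a leg on $a$ equals minus a leg on $b$ is false. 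They differ by $e_{\mathrm{out}}$, which is precisely the class you are trying to kill, so that step is circular.
\item Your second step does not exist as described. Pushing $y$ past $v$ is again a three-term leg move, $e_a = e_{\mathrm{out}} - e_b$, and no bridge term appears.
\item Even in the symmetric case $m=n$, combining $\tGamma = 2\tGamma'$ with $\tGamma' = \tGamma - \tGamma'$ gives the tautology $c=1$.
\end{itemize}
In the weight-polynomial picture this is transparent: leg moves impose no constraint on $w_\Gamma$ at all. Modulo leg moves and the digon swap alone, $e_{\mathrm{out}}^\ell$ survives, because the swap fixes $e_{\mathrm{out}}$ and reverses two vertex orientations. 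So your "IHX at the connecting edge" in the last paragraph cannot constrain $w_\Gamma$, and the bridge hypothesis never actually enters your argument.

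\textbf{The missing idea.} You need the skeleton-changing IHX relator on a whole, leg-free digon edge, whose two endpoints are the two digon vertices. This is the only place a bridge term arises. Its H term is a tadpole on a bridge, which vanishes by hypothesis. Its X term is again a digon, but with a changed homology framing. The legs are not moved; their classes are transformed.

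In the paper's local model, take a digon with two anchors, let $x_1$ be the anchor-to-anchor path, and let $x_2$ be the digon cycle. Flipping the digon and then applying this IHX gives $w_\Gamma(x_1,x_2)=w_\Gamma(x_1+x_2,x_2)$. This forces $w_\Gamma$ to be independent of $x_1$, \ie, to pair to zero with every monomial containing $e_1$, the class (dual to $x_1$) of a leg next to the digon.

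This relation is a unipotent shear, not a scalar one. Dually, with $e_2$ dual to $x_2$, you get $e_1^i(e_1+e_2)^j - e_1^ie_2^j = j\,e_1^{i+1}e_2^{j-1} + (\text{terms with higher powers of } e_1)$. So a purely diagrammatic version needs a triangular induction, downward in the power of $e_1$ and starting from $e_1^\ell$. A single identity $\tGamma = c\,\tGamma$ with $c\neq 1$ is not available.
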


\begin{proof} We can localize to the case when the skeleton $\Gamma$
is a digon with two anchors and only one leg next to the digon, if we
include bridge vanishing by fiat as an extra relator.  In other words,
we want to show that
\begin{eq}{e:digon} \begin{tp}[style=basic]
\draw (-3,0) -- (-1.25,0) (0,0) circle (1.25) (1.25,0) -- (3.75,0);
\draw (-2.8,0) node[left] {\anchvert} (3.55,0) node[right] {\anchvert};
\foreach \th in {45,117,135,-45,-117,-135} { \draw (\th:1.25) -- (\th:2.05); }
\foreach \th in {63,81,99,-63,-81,-99} { \fill (\th:1.65) circle (.08); }
\draw (2.5,0) -- (2.5,.8);
\draw (0,2.5) node {$a$} (0,-2.5) node {$b$};
\end{tp} = 0 \in \cP_{1,2,a+b+1}/\braket{\Br} \end{eq}
for all $a,b \ge 0$, where Br is the set of Jacobi diagrams with bridges.
Working with weight polynomials, we can flip the digon and then apply IHX
at the indicated edge:
\begin{multline*} \begin{tp}[style=basic,very thick]
\draw (-3.5,0) -- (-1.5,0) (0,0) circle (1.5) (1.5,0) -- (3.5,0);
\draw (-3.3,0) node[left] {\anchvert} (3.3,0) node[right] {\anchvert};
\draw[medcyan,->] (-170:1.1) arc (-170:170:1.1);
\draw[medcyan,->] (-3.5,.4) -- ({180-atan(.4/1.9)}:1.9)
    arc ({180-atan(.4/1.9)}:{atan(.4/1.9)}:1.9) -- (3.5,.4);
\draw (0,2.5) node {$x_1$} (0,0) node {$x_2$};
\end{tp} \stackrel{\text{flip}}=
\begin{tp}[style=basic,very thick]
\draw (-3.5,0) -- (-1.5,0) (0,0) circle (1.5) (1.5,0) -- (3.5,0);
\draw[medyel,ultra thick] (-1.5,0) arc (180:360:1.5);
\draw (-3.3,0) node[left] {\anchvert} (3.3,0) node[right] {\anchvert};
\draw[medcyan,->] (-170:1.1) arc (-170:170:1.1);
\draw[medcyan,->] (-3.5,.4) -- ({180-atan(.4/1.9)}:1.9)
    arc ({180-atan(.4/1.9)}:{atan(.4/1.9)}:1.9) -- (3.5,.4);
\draw (0,2.5) node {$x_1+x_2$} (0,0) node {$-x_2$};
\end{tp} \\
\stackrel{\IHX}= \begin{tp}[style=basic,very thick]
\draw (-3.5,0) -- (-1.5,0) (0,0) circle (1.5) (1.5,0) -- (3.5,0);
\draw (-3.3,0) node[left] {\anchvert} (3.3,0) node[right] {\anchvert};
\draw[medcyan,->] (-170:1.1) arc (-170:170:1.1);
\draw[medcyan,->] (-3.5,.4) -- ({180-atan(.4/1.9)}:1.9)
    arc ({180-atan(.4/1.9)}:{atan(.4/1.9)}:1.9) -- (3.5,.4);
\draw (0,2.5) node {$x_1+x_2$} (0,0) node {$x_2$};
\end{tp} + \begin{tp}[style=basic,very thick]
\draw (-3,2) -- (3,2) (0,2) -- (0,.5) (0,-1) circle (1.5);
\draw[medcyan,->] (-3,2.4) -- (3,2.4);
\draw[medcyan,->] (-170:1.1)++(0,-1) arc (-170:170:1.1);
\draw (0,3.1) node {$x_1+x_2$} (0,-1) node {$-x_2$};
\draw (-2.8,2) node[left] {\anchvert} (2.8,2) node[right] {\anchvert};
\draw[darkred,ultra thick] (-3.25,-2.5) -- (3,3.75);
\end{tp}
\end{multline*}

Thus the weight polynomial $w_\Gamma(x_1,x_2)$ satisfies
\[ w_\Gamma(x_1,x_2) = w_\Gamma(x_1+x_2,x_2), \]
which is only possible if
\[ w_\Gamma(x_1,x_2) = w_\Gamma(x_2) \]
is independent of $x_1$.  All of the terms in $w_\Gamma(x_1,x_2)$ that
use $x_1$ vanish, which is dually equivalent to the statement that every
Jacobi diagram with a leg by the digon vanishes.
\end{proof}

\begin{remark} As far as we know, it is simpler to prove \Lem{l:lbd}
(and \Lem{l:lbt} below) with weight polynomials than directly with
Jacobi diagrams with legs.  The author also found explicit sequences
of AS and IHX moves to show that all Jacobi diagrams \eqref{e:digon}
vanish if bridges vanish.  Our direct proof is by induction on $a+b$.
We invite the reader to explore the matter independently.
\end{remark}

\begin{lemma}[Leg-by-triangle vanishing] Suppose that for some fixed $\ell$
and $g$, a Jacobi diagram $\tGamma \in \cP_{g,\ell}$ vanishes whenever
its skeleton $\Gamma$ has a digon.  Then $\tGamma \in \cP_{g,\ell}$ also
vanishes if it has a leg next to a triangle (possibly with its own legs).
\label{l:lbt} \end{lemma}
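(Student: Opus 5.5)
I will follow the model of the proof of \Lem{l:lbd}: localize to the triangle and work dually with weight polynomials. The localized skeleton is a triangle with three anchors, one at each corner. Its three sides may carry their own legs, and there is one extra leg on one of the three anchor edges next to the triangle. We impose digon vanishing by fiat as an extra relator $\Di$, so the goal is
\[ \tGamma = 0 \in \cP_{1,3,\ell}/\braket{\Di}. \]
Here $H_1(\Gamma,\dGamma)$ is $3$-dimensional. I take the basis $x_1, x_2$ given by the arcs running between anchors through the triangle, and $x_3$ given by the triangle's own cycle. The legs next to the triangle are the ones that pair with the anchor-to-anchor classes.

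The main step is to apply IHX at the edge $e$ of the triangle opposite the corner with the extra leg, or at an edge joining the triangle to an anchor; I will choose whichever gives a clean relation. Of the three terms of this IHX relation, one is the original triangle $\Gamma_1$. The second is again a triangle-type skeleton $\Gamma_2$, isomorphic to $\Gamma_1$ after a flip. The third term $\Gamma_3$ has a digon: contracting and re-expanding an edge of a triangle adjacent to an external edge produces a digon attached to the rest. By the relator $\Di$, that third term vanishes. Comparing homology framings through $\Gamma_4$, as in \eqref{e:gamma4}, the identification $\Gamma_2 \cong \Gamma_1$ (composed with the flip symmetry of the triangle, which fixes the AS sign) should act on $H_1$ by a transvection of the form
\[ x_i \mapsto x_i \pm x_3, \]
where $x_i$ is the anchor-to-anchor class passing the extra leg. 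This gives the functional equation
\[ w_\Gamma(x_1,x_2,x_3) = w_\Gamma(x_1 + x_3, x_2, x_3), \]
possibly up to a sign that must be tracked through AS.

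A polynomial invariant under a unipotent transvection must be independent of the transvected variable, in the appropriate dual coordinates. Iterating, or using the invariance $w(x_1+tx_3,\dots)=w(x_1,\dots)$ for all integers $t$ and hence as a polynomial identity, shows that $w_\Gamma$ does not depend on the coordinate dual to the leg next to the triangle. Dually, every diagram with a leg in that position vanishes modulo $\Di$. Symmetry of the triangle under rotation then handles the other corners.

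The expected obstacle is the bookkeeping in the IHX step. I must choose the edge so that exactly one term has a digon and the other two are both triangles related by a pure transvection, rather than by a transvection composed with a sign or a coordinate swap; a sign would give $w=-w\circ T$ instead. If one IHX application produces a swap, I will compose with the $S_3$ symmetry of the anchored triangle, or apply IHX twice, to obtain a unipotent relation. As in \Lem{l:lbd}, I also need that the legs on the triangle's sides are harmless. They are, because they are already absorbed into $S^\ell(H^1)$ by \Lem{l:sympow}, and the evacuation argument in that lemma makes the IHX step compatible with them.
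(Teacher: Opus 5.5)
Your framework matches the paper's. You localize to a triangle with three numbered anchors modulo $\braket{\Di}$. You note that IHX at a triangle edge produces one digon term, which dies, and one new triangle. You turn this into a functional equation for $w_\Gamma$ and finish with a transvection argument.

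The gap is the central step: a single IHX at a triangle edge cannot give a transvection $x_i\mapsto x_i\pm x_3$. Comparing framings through $\Gamma_4$ as in \eqref{e:gamma4}, IHX at the bottom edge gives
\[ w_\Gamma(x_1,x_2,x_3) = -\,w_\Gamma(x_1+x_3,\,x_2+x_3,\,-x_3). \]
Both anchor-to-anchor paths that avoid that edge are rerouted the long way around the triangle, the cycle is reversed, and an AS sign appears. This map $T$ has determinant $-1$, so it is not a transvection in any basis. It is also an involution, so the relation $w=-w\circ T$ forces no independence; for instance $w=(2x_1+x_3)x_3$ satisfies it.

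Your proposed repairs do not rescue this step. The anchors are numbered, so the anchored triangle has no nontrivial automorphisms, and there is no $S_3$ or flip symmetry to compose with. The new triangle is matched to the old one by the unique anchor-preserving isomorphism. Applying IHX again at the edge just created only reproduces the same three-term relation, consistent with $T^2=1$.

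The missing idea is to compose two IHX moves at two \emph{different} edges. These should be the two triangle edges incident to the corner carrying the leg, not the edge opposite it. In the paper, IHX at the bottom edge is followed by IHX at the left edge of the resulting triangle. The two signs cancel, and the two determinant $-1$ maps compose to the genuine transvection $w_\Gamma(x_1,x_2,x_3)=w_\Gamma(x_1+x_3,x_2,x_3)$. Your polynomial argument then shows that $w_\Gamma$ is independent of $x_1$. The mirror-image computation gives independence of $x_2$, so $w_\Gamma$ depends on $x_3$ alone. This kills a leg at every corner at once, including the top corner, whose leg pairs with both $x_1$ and $x_2$. Until you carry out this explicit two-step composition, the proposal has not produced any unipotent relation, and that relation is the whole content of the lemma.
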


\begin{proof} As in \Lem{l:lbd}, we can localize to the case when $\Gamma$
is a triangle with three anchors, if we include digon vanishing by fiat.
Letting $\Di$ be the set of diagrams with a digon, we want to show that
\[ \tGamma = 0 \in \cP_{1,3,\ell}/\braket{\Di} \]
when $\tGamma$ has a leg next to the triangle (and possibly legs on the
triangle).  Again working with weight polynomials, we can apply the IHX
relation twice at the indicated edges, omitting digon terms that vanish
by hypothesis:
\begin{align*} \begin{tp}[style=basic,very thick,shift={(0,-.8)}]
\draw (90:2.8) -- (210:2.8) -- (330:2.8) -- cycle;
\draw[medyel,ultra thick] (210:2.8) -- (330:2.8);
\foreach \th in {90,210,330} {
    \draw (\th:2.8) -- (\th:4); \draw (\th:4.6) node {\anchvert};}
\draw[medcyan,rounded corners,->,rotate=240]
    (-.15,-1) -- (210:2) -- (90:2) -- (330:2) -- (.15,-1);
\foreach \th in {90,330}
    { \draw[medcyan,->,rounded corners]
    ($(\th+120:4)+(\th+30:.4)$) -- ($(\th+120:2.8)+(\th+45:{.4/cos(15)})$)
    -- ($(\th:2.8)+(\th+75:{.4/cos(15)})$) -- ($(\th:4)+(\th+90:.4)$); }
\draw (0,0) node {$x_3$};
\draw (150:2.6) node {$x_1$} (30:2.6) node {$x_2$};
\end{tp} &\stackrel{\IHX}= -
\begin{tp}[style=basic,very thick,shift={(0,-.8)}]
\draw (90:2.8) -- (210:2.8) -- (330:2.8) -- cycle;
\draw[medyel,ultra thick] (210:2.8) -- (90:2.8);
\foreach \th in {90,210,330} {
    \draw (\th:2.8) -- (\th:4); \draw (\th:4.6) node {\anchvert};}
\draw[medcyan,rounded corners,->,rotate=240]
    (-.15,-1) -- (210:2) -- (90:2) -- (330:2) -- (.15,-1);
\foreach \th in {90,330}
    { \draw[medcyan,->,rounded corners]
    ($(\th+120:4)+(\th+30:.4)$) -- ($(\th+120:2.8)+(\th+45:{.4/cos(15)})$)
    -- ($(\th:2.8)+(\th+75:{.4/cos(15)})$) -- ($(\th:4)+(\th+90:.4)$); }
\draw (.15,0) node {$-x_3$};
\draw (150:2.6)++(-.4,1) node {$x_1+x_3$} (30:2.6)++(.4,1) node {$x_2+x_3$};
\end{tp} \\ &\stackrel{\IHX}=
\begin{tp}[style=basic,very thick,shift={(0,-.8)}]
\draw (90:2.8) -- (210:2.8) -- (330:2.8) -- cycle;
\foreach \th in {90,210,330} {
    \draw (\th:2.8) -- (\th:4); \draw (\th:4.6) node {\anchvert};}
\draw[medcyan,rounded corners,->,rotate=240]
    (-.15,-1) -- (210:2) -- (90:2) -- (330:2) -- (.15,-1);
\foreach \th in {90,330}
    { \draw[medcyan,->,rounded corners]
    ($(\th+120:4)+(\th+30:.4)$) -- ($(\th+120:2.8)+(\th+45:{.4/cos(15)})$)
    -- ($(\th:2.8)+(\th+75:{.4/cos(15)})$) -- ($(\th:4)+(\th+90:.4)$); }
\draw (0,0) node {$x_3$};
\draw (150:2.6)++(-.4,1) node {$x_1+x_3$} (30:2.6) node {$x_2$};
\end{tp} \end{align*}
Thus the weight polynomial $w_\Gamma(x_1,x_2,x_3)$ satisfies
\[ w_\Gamma(x_1,x_2,x_3) = w_\Gamma(x_1+x_3,x_2,x_3) = w_\Gamma(x_2,x_3); \]
it cannot depend on $x_1$.  The same calculation with left and
right swapped in each triangle yields
\[ w_\Gamma(x_1,x_2,x_3) = w_\Gamma(x_1,x_2+x_3,x_3) = w_\Gamma(x_1,x_3). \]
Thus $w_\Gamma$ can only depend on $x_3$, and the equivalent dual statement
for Jacobi diagrams is the statement of the lemma. \end{proof}

\section{Three loops}
\label{s:three}

In this section, we reprove the Moskovich--Ohtsuki result that odd Jacobi
diagrams with 3 loops vanish.

Let $\ell$ be odd, and let $\tGamma \in \cP_{3,\ell}$ be a Jacobi diagram
with 3 loops and $\ell$ legs.  By \Cor{c:odd12} (odd vanishing for $\le 2$
loops) and \Lem{l:bridge} (bridge vanishing), we can assume that $\tGamma$
is bridgeless.  Thus there are two choices for its skeleton $\Gamma$
(before orienting the vertices):
\[ \Gamma_1 =\; \begin{tp}[style=basic]
\draw (0,0) ellipse (2.8 and 1.4);
\foreach \th in {0,180} { \draw[rotate=\th] ({2.8*cos(65)},{1.4*sin(65)})
    to[bend right=20] ({2.8*cos(65)},{-1.4*sin(65)}); } \end{tp}
\qquad \Gamma_2 =\; \begin{tp}[style=basic]
\draw (0,0) circle (2);
\foreach \th in {90,210,330} { \draw (0,0) -- (\th:2); }
\end{tp} \]
If $\Gamma = \Gamma_1$, then $\tGamma = 0$ by \Lem{l:symvan} (symmetry
vanishing). If $\Gamma = \Gamma_2$, then $\tGamma = 0$ by \Lem{l:lbt} (leg
by triangle), because each of the 6 edges of the tetrahedron $\Gamma_2$
is next to a triangle, and because we already showed that diagrams with
digons vanish.  Thus $\cP_{3,\ell} = 0$.

We will use the following generalization at 3 loops to establish
bridge vanishing at 4 loops.

\begin{theorem} If $\vell = (\ell_1,\ell_2)$ is odd, then $\cP_{3,\vell}
= 0$.
\label{th:odd3c2} \end{theorem}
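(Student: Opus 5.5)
The plan is to run the uncolored three-loop argument of \Sec{s:three} again with two commuting leg colors, checking at each step that the vanishing lemmas used there still apply. Let $\vell = (\ell_1,\ell_2)$ be odd and let $\tGamma \in \cP_{3,\vell}$.

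First, I will dispose of bridges with \Lem{l:bridge}, part 2, taking $g=3$. Its hypothesis is that $\cP_{h,(1,\vk)} = 0$ for $h \le 2$ and every even $\vk \preceq \vell$. In that situation the population $(1,\vk)$ is odd, so \Cor{c:odd12} gives the vanishing directly, since it holds for any odd color population with $g \le 2$. Hence we may assume $\tGamma$ is bridgeless, and its skeleton is either the theta-with-digons graph $\Gamma_1$ or the tetrahedron $\Gamma_2$.

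For $\Gamma_1$, I will reuse the symmetry argument from \Sec{s:three}, now in the colored form of \Lem{l:symvan}, which is already stated for color populations. The involution of $\Gamma_1$ should be chosen to fix an odd number $n$ of vertices with their orientations reversed, and to act on all of $H^1(\Gamma_1)$ by the scalar $(-1)^{n+1}$. Then $V = 0$, the factorization condition of \Lem{l:symvan} is trivial, and the diagram vanishes. I will only need to check that the same involution used in the uncolored case has these properties; for instance, a rotation that reverses all four vertices and negates $H^1$ should work.

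Having now shown that all diagrams with a digon vanish in $\cP_{3,\vell}$, the tetrahedron case requires a colored version of \Lem{l:lbt}, and this is the step I expect to be the main obstacle. The weight-polynomial argument there works with the local space $\cL$ and the twisted-coefficient description, so it should generalize. By \Lem{l:sympow}, weight systems become elements of $\cP_3^*(S^{\vell}(H_1(\Gamma)))$, that is, polynomials $w_\Gamma(\vx,\vec y)$ that are bihomogeneous in two copies of the homology, one copy for each color. The two IHX moves in the proof of \Lem{l:lbt} induce the same change of variables $x_1 \mapsto x_1+x_3$ on both copies simultaneously. The resulting invariance $w(x_1,x_2,x_3;y_1,y_2,y_3) = w(x_1+x_3,x_2,x_3;y_1+y_3,y_2,y_3)$ is weaker than in the uncolored case: it does not force independence from $x_1,y_1$, only that $w$ depends on $x_1,y_1$ through $2\times2$ minors such as $x_1y_3 - x_3y_1$. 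So the direct argument fails here, and extra input is required.

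My first attempt to close this gap is to combine the leg-by-triangle relations around all four faces with the $S_4$ symmetry of the tetrahedron. Each triangle gives an invariance under a unipotent map, and together these maps generate a large subgroup of $\SL(3,\Z)$ acting diagonally on the pair $(\vx,\vec y)$. Polynomial invariants of $\SL_3$ acting on two vectors in $\Q^3$ are constant, because the only invariant of $\SL_3$ would be the determinant, and that needs three vectors. Zariski density of the unipotent subgroup then forces $w_\Gamma$ to be constant, which is impossible in odd total degree. Alternatively, I can apply \Lem{l:symvan} to an involution of the tetrahedron that fixes two vertices and acts by $-1$ on the relevant quotient, after first using the partial leg-by-triangle constraints to move the legs into the eigenspace $V$. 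I expect the main work to be checking that the IHX-derived invariances, with the digon terms dropped, really do generate enough of $\SL_3$ or $\GL_3$, including the sign conventions.
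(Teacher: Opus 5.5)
Your proposal is correct and is essentially the paper's proof: bridges are removed via \Lem{l:bridge} and \Cor{c:odd12}, $\Gamma_1$ is handled by \Lem{l:symvan}, and for the tetrahedron your first route is exactly \Lem{l:sl3} plus Zariski density, with the last step phrased dually rather than via coinvariants and reductivity. Here \Lem{l:sl3} means the two-step leg-by-triangle moves give elementary matrices, which rotations of the tetrahedron conjugate to all of $\SL(3,\Z)$; your dual phrasing is that there are no nonconstant $\SL_3$-invariant polynomials in two vectors of $\Q^3$, whereas the paper passes to invariants of $S^{\ell_1}(V)\otimes S^{\ell_2}(V)$ by reductivity. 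One slip: for $\Gamma_1$ the number of fixed vertices with reversed orientation is always even, so ``odd $n$'' cannot occur; however, the involution you actually name is the right one and gives $V=0$, namely the flip exchanging the two vertices of each digon, which acts by $-1$ on $H^1(\Gamma_1)$ with no net AS sign.
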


The above proof that $\cP_{3,\ell} = 0$ when $\ell$ is odd does not establish
\Thm{th:odd3c2}, because Lemmas~\ref{l:lbd} and \ref{l:lbt} are only valid
as stated for polynomials in $H_1(\Gamma)$ and not more general tensors.
Thus the case $\Gamma = \Gamma_2$ requires a different argument.

\begin{lemma} Ignoring the other skeleta with 3 loops, the six IHX operations
on the tetrahedron $\Gamma = \Gamma_2$ generate the action of $\SL(3,\Z)$ on
\[ H^1(\Gamma) \cong V \defeq \Q^3. \]
Moreover, for every odd color population $\vell$, the IHX operations
generate the induced action of $\SL(3,\Z)$ on
\[ \cL_\vell(\Gamma) \cong S^\vell(V). \]
\label{l:sl3} \end{lemma}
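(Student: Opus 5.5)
The plan is to make the IHX operations explicit as linear maps on $H^1(\Gamma)$ (dually $H_1(\Gamma)$), using the dual bases of the example~\eqref{e:tetra}, and then to invoke the classical fact that elementary transvections generate $\SL(3,\Z)$.

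First, I would pin down what a single IHX operation on the tetrahedron does. Take an edge $e$ of $\Gamma=\Gamma_2$ and contract it to get $\Gamma_4$. The four half-edges at the resulting 4-valent vertex go in pairs to the two other vertices. Of the three resolutions of this vertex, one is $\Gamma$ itself. Another is again a tetrahedron $\Gamma'$, obtained by swapping a pair of endpoints. The third pairs each pair of parallel half-edges together, which creates two digons, so its skeleton is $\Gamma_1$. "Ignoring the other skeleta" means discarding that $\Gamma_1$ term. What remains is a relation $\Gamma = \pm\Gamma'$ between homotopy-framed tetrahedra. Composing with an abstract isomorphism $\Gamma'\cong\Gamma$ (an element of $S_4$) turns it into a linear operator on $H^1(\Gamma)$, together with an AS sign. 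This is exactly the computation already carried out in the proof of \Lem{l:lbt}, where one IHX move produced the substitution $x_1\mapsto x_1+x_3$ with the other $x_i$ fixed. By the $S_4$ symmetry of the tetrahedron, the six edges give the six substitutions $x_i\mapsto x_i+x_j$ for ordered pairs $i\neq j$ in $\{1,2,3\}$, possibly up to a global sign and an element of $S_4$.

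Second, I would settle signs. Each raw IHX operation is a matrix $\pm E_{ij}$, where $E_{ij}=I+e_{ij}$, possibly composed with a tetrahedral automorphism $f\in S_4$. That automorphism acts on $H_1$ by a signed permutation of $x_1,\dots,x_4$, and it carries the AS vertex-orientation sign. I would normalize each operation by a suitable automorphism of $\Gamma$ so that it becomes exactly $E_{ij}$, checking the determinant and the vertex-orientation parity. On $H^1(\Gamma)$ the first claim only needs the operations up to the scalar $-1$, because the two-term relation is linear. The point to check is that the composite lies in $\SL(3,\Z)$ rather than merely $\GL(3,\Z)$; the $\Gamma_1$ discards and AS signs must not introduce an extra $\det$ twist.

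Third, once the six elementary matrices $E_{ij}$ are obtained, the classical row-reduction (Euclidean algorithm) argument shows they generate $\SL(3,\Z)$. This proves the first claim.

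For the second claim, pass to $\cL_\vell(\Gamma)\cong S^\vell(V)$ via \Lem{l:sympow}. There the operation induced by an IHX move is the functorial action of its matrix on $S^\vell$, times the AS sign. The remaining issue, and the main obstacle I expect, is that some operations may naturally appear as $-E_{ij}$, or as $E_{ij}$ times an orientation sign $-1$. Here oddness of $\vell$ is used: $-I$ acts on $S^\vell(V)$ by $(-1)^\ell=-1$. So a stray AS sign can be absorbed into the matrix as $-I\cdot E_{ij}$, and the group generated acts on $S^\vell(V)$ exactly as the induced $\SL(3,\Z)$ action, possibly enlarged by $\pm I$, which is harmless. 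I would verify this by tracking, for one edge in detail, the vertex orientations through the IHX move and the automorphism. The other five edges then follow by $S_4$-equivariance.
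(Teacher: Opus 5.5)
Your overall strategy matches the paper's: realize elementary matrices by IHX moves, use that the $E_{ij}$ generate $\SL(3,\Z)$, then track signs for odd $\vell$. However, the central computational claim is false.

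In the proof of \Lem{l:lbt}, the substitution $x_1\mapsto x_1+x_3$ comes from \emph{two} IHX moves, not one. A single move on the tetrahedron gives $w(x_1,x_2,x_3)=-w(x_1+x_3,x_2+x_3,-x_3)$. This is a matrix $f$ with $\det f=-1$ (an involution with eigenvalues $1,1,-1$), together with an AS sign $-1$.

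No normalization by a tetrahedral automorphism turns this into $E_{ij}$. Each $\sigma\in S_4$ reverses an even number of vertices, and acts on $H_1(\Gamma)$ by $(-1)^\sigma$ times a permutation of $x_1,\dots,x_4$. So it has determinant $+1$ and carries no sign, and the pair (determinant, AS sign) $=(-1,-1)$ of a move is unchanged by such normalization. A trace computation also shows that $-f$ composed with any automorphism never has trace $3$, so it is never a transvection; this rules out $-E_{ij}$ as well. Hence your step ``each edge gives $x_i\mapsto x_i+x_j$'' cannot be carried out. The repair is what the paper does: compose two adjacent moves as in \Lem{l:lbt} to get $E_{13}$ and $E_{23}$. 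Then conjugate by the 3-fold rotation, which cyclically permutes $e_1,e_2,e_3$, to obtain all six $E_{ij}$.

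The same misidentification hides the actual role of oddness in the second claim. It is not a matter of absorbing an occasional stray AS sign. \emph{Every} single move has $\det f=-1$ and also negates the diagram. Since $-I$ acts on $S^\vell(V)$ by $(-1)^\ell=-1$, each move acts on $S^\vell(V)$ exactly as $-f\in\SL(3,\Z)$. The automorphisms already lie in $\SL(3,\Z)$ and carry no sign. So the generated group is contained in $\SL(3,\Z)$, while the two-step composites show it contains $\SL(3,\Z)$.

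Your remark that an enlargement by $\pm I$ would be ``harmless'' is wrong for the lemma as stated. Since $-I$ acts by $-1$ on $S^\vell(V)$, generating it would kill the coinvariants outright, and the action would no longer be the $\SL(3,\Z)$ action. That would contradict, for instance, $\dim\cP_{3,(\ell,\ell,\ell)}=1$ from the remark after \Thm{th:odd3c2}. Showing that the generated action lands inside $\SL(3,\Z)$ is precisely where the hypothesis that $\vell$ is odd is used, and your proposal defers this rather than supplying it.
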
 \eatline \eatline

\begin{proof} In the diagram \eqref{e:tetra}, recall that the homology
basis $\{x_k\}$ can be viewed as linear functions on $H^1(\Gamma)$,
indeed as the scalar coefficients of a vector $v \in H^1(\Gamma)$ in the
cohomology basis $\{e_k\}$.  If we apply the two-step calculation in the
proof of \Lem{l:lbt} to the tetrahedron diagram \eqref{e:tetra}, the result
is 2 of the 6 elementary matrices that express adding one row to another:
\begin{align*} \begin{bmatrix} x'_1 \\ x'_2 \\ x'_3 \end{bmatrix}
    &= \begin{bmatrix} 1 & 0 & 1 \\ 0 & 1 & 0 \\ 0 & 0 & 1 \end{bmatrix}
    \begin{bmatrix} x_1 \\ x_2 \\ x_3 \end{bmatrix} \in V \\[1ex]
    \begin{bmatrix} x'_1 \\ x'_2 \\ x'_3 \end{bmatrix}
    &= \begin{bmatrix} 1 & 0 & 0 \\ 0 & 1 & 1 \\ 0 & 0 & 1 \end{bmatrix}
    \begin{bmatrix} x_1 \\ x_2 \\ x_3 \end{bmatrix} \in V.
\end{align*}
A rotation of the tetrahedron cyclically permutes the basis $\{e_k\}$.
If we conjugate these two elementary matrices by such a rotation, we get
all 6 elementary matrices, which then generate $\SL(3,\Z)$.

Thus the IHX operations generate at least $\SL(3,\Z)$.  We also want to
show that any one IHX operation lies in $\SL(3,\Z)$.  Any such operation,
for example either IHX step in the proof of \Lem{l:lbt}, yields a change
of basis $f \in \GL(3,\Z)$ of $H^1(\Gamma)$ with determinant $-1$, and
also negates $\Gamma$.  When $\vell$ is odd, the result is equivalent
to acting by $-f \in \SL(3,\Z)$, so that every IHX operation acts on
$S^\vell(H^1(\Gamma))$ by an element of $\SL(3,\Z)$.
\end{proof}

\begin{proof}[Proof of \Thm{th:odd3c2}] By \Cor{c:odd12} (odd vanishing
for 1 and 2 loops) and \Lem{l:bridge} (bridge vanishing), we can again
assume that $\tGamma \in \cP_{3,\ell}$ is bridgeless.  If $\Gamma =
\Gamma_1$, then $\tGamma = 0$ again by \Lem{l:symvan} (symmetry vanishing).
Only the tetrahedron case $\Gamma = \Gamma_2$ requires a different argument,
because \Lem{l:lbt} (leg by triangle) does not hold for colored legs.

If $\Gamma = \Gamma_2$, then \Lem{l:sl3} implies that
$\cP_{3,(\ell_1,\ell_2)}$ is the coinvariant quotient
\[ \cP_{3,(\ell_1,\ell_2)} \cong \Coinv_{\SL(3,\Z)}
    (S^{\ell_1}(V) \otimes S^{\ell_2}(V)). \]
Note that
\[ W \defeq S^{\ell_1}(V) \otimes S^{\ell_2}(V) \]
is a representation of $\SL(3,\Q)$ with polynomial matrix entries, that
$\SL(3,\Z)$ is Zariski dense in $\SL(3,\Q)$, and that $\SL(3,\Q)$ is a
reductive algebraic group.  It follows that
\[ \Coinv_{\SL(3,\Z)}(W) = \Coinv_{\SL(3,\Q)}(W) \cong \Inv_{\SL(3,\Q)}(W). \]
The theorem holds if and only if the direct sum decomposition of $\SL(3,\Q)
\acts W$ does not include the trivial representation.  It does not, because
$S^\ell(V)$ is the irrep of $\SL(3,\Q)$ of highest weight $(\ell,0)$,
and two such irreps are only dual when $\ell_1$ and $\ell_2$ both vanish,
which is not the case here.  Thus $\cP_{3,(\ell_1,\ell_2)}$ vanishes.
\end{proof}

\begin{remark} The proof of \Thm{th:odd3c2} can be extended to
show that $\cP_{3,\vell}$ vanishes when $\vell =
(\ell_1,\ell_2,\ell_3)$ is odd and $\ell_1$,$\ell_2$, and $\ell_3$ are
not all equal. It also shows that $\dim(\cP_{3,(\ell,\ell,\ell)}) = 1$
when $\ell$ is odd; in particular, it does not vanish.
\end{remark}

\section{Four loops}

In this section, we prove \Thm{th:odd4}.

Let $\ell$ be odd and let $\tGamma \in \cP_{4,\ell}$ be a Jacobi diagram
with 4 loops and $\ell$ legs. \Cor{c:odd12} establishes vanishing for 1
or 2 loops for any odd color population $\vell$ of legs.  \Thm{th:odd3c2}
establishes odd vanishing for 3 loops, for any odd color population $\vell =
(\ell_1,\ell_2)$ with two colors.  Thus \Lem{l:bridge} holds, and tells us
that every $\tGamma \in \cP_{4,\ell}$ with a bridge vanishes.  This leaves
5 bridgeless choices for the skeleton $\Gamma$ (before orienting vertices).
We label the first 4 with homology generators, omitting a given generator
$x_k$ when we use \Lem{l:lbd} or \ref{l:lbt} to remove the dependence of
the weight polynomial $w_\Gamma(\vx)$ on $x_k$.

The cases (which we will settle in order) are as follows.
\begin{enumerate}
\item If
\[ \Gamma = \Gamma_1 =\;\; \begin{tp}[style=basic]
\draw (0,0) circle (2);
\foreach \th/\n in {0/2,120/1,240/3} {
    \draw[fill=white] (\th:2) circle (.9);
    \draw (\th:2) node {$x_\n$}; } \end{tp}\;\;, \]
then $\tGamma$ vanishes by \Lem{l:lbd} (leg by digon) followed by
\Lem{l:symvan} (symmetry vanishing).
\item If
\[ \Gamma = \Gamma_2 =\;\; \begin{tp}[style=basic]
\draw (0,0) ellipse (4 and 1.5) (0,-1.5) -- (0,1.5);
\draw ({4*cos(55)},{1.5*sin(55)}) to[bend right=25]
    ({4*cos(55)},{-1.5*sin(55)});
\draw ({-4*cos(55)},{1.5*sin(55)}) to[bend left=25]
    ({-4*cos(55)},{-1.5*sin(55)});
\foreach \x/\n in {-3/1,-1/2,1/3,3/4} { \draw (\x,0) node {$x_\n$}; }
\end{tp}\;\;, \]
then $\tGamma$ vanishes by \Lem{l:symvan}
(symmetry vanishing).
\item If
\[ \Gamma = \Gamma_3 =\;\; \begin{tp}[style=basic]
\draw (0,0) circle (2.5);
\foreach \th in {80,220,330} { \draw (0,0) -- (\th:2.5); }
\draw[fill=white] (150:2.5) circle (.9);
\draw (150:2.5) node {$x_1$} (25:1.5) node {$x_2$} (275:1.5) node {$x_3$};
\end{tp}\;\;, \]
then $\tGamma$ vanishes by \Lem{l:lbd} (leg by digon)
followed by \Lem{l:symvan} (symmetry vanishing).
\item  If
\[ \Gamma = \Gamma_4 =\;\; \begin{tp}[style=basic]
\draw (0,0) ellipse (4 and 2);
\foreach \s in {-1,1} { \draw ({\s*4*cos(60)},{2*sin(60)}) -- ({\s*1.3},0)
    -- ({\s*4*cos(60)},{-2*sin(60)}); }
\draw (-1.3,0) -- (1.3,0);
\draw (-2.7,0) node {$x_1$} (2.7,0) node {$x_2$};
\end{tp}\;\;, \]
then $\tGamma$ vanishes by \Lem{l:lbt} (leg by triangle) followed by
\Lem{l:symvan} (symmetry vanishing), given that the earlier cases show
that digons vanish.
\item If
\[ \Gamma = \Gamma_5 = K_{3,3} =\;\;\begin{tp}[style=basic]
\foreach \th in {0,120,240} {
    \draw ({\th+348}:2.5) -- ({\th+270}:{2.5*sin(12)}); }
\foreach \th in {0,120,240} {
    \draw[style=overcross] ({\th+270}:{2.5*sin(12)}) -- ({\th+192}:2.5); }
\draw (0,0) circle (2.5);
\end{tp}\;\;, \]
then any IHX move yields two terms with skeleton $\Gamma_4$, thus $\tGamma$
vanishes.
\end{enumerate}
This concludes the proof of \Thm{th:odd4}.

\section{Five or more loops}

In this article, we proved \Thm{th:odd4} without complicated formulas or
computer assistance.  It is not clear whether odd vanishing for 5 loops
is practical without computer assistance. For 6 or more loops, computer
assistance is presumably necessary or at least highly advisable, unless
a deeper idea (or a counterexample!) arises to better understand the odd
Vassiliev problem.

Let $\ell$ be odd.  Using our methods, the odd Vassiliev problem for 5 loops
begins with the question of whether every diagram $\tGamma \in \cP_{5,\ell}$
with a bridge vanishes.  We do not know whether $\cP_{4,(1,\ell-1)}$
always vanishes.  If it does, then bridges vanish in $\cP_{5,\ell}$ by
\Lem{l:bridge}.  Since $\cP_{3,(1,1,1)}$ does not vanish, we do not even
know whether bridges vanish in $\cP_{4,(1,\ell-1)}$; although only one
skeleton remains as an open case for that question, namely a tetrahedron
with a tadpole.  If bridges vanish in $\cP_{5,\ell}$, then that leaves
16 bridgeless choices for the skeleton $\Gamma$.  We know that $\tGamma$
vanishes in some of these cases, but enough cases remain that chaos could
ensue as far as we know.

For any fixed $g$ and $\ell$, we can always decompose a weight system $w \in
\cP_{g,\ell}^*$ into a finite list of weight polynomials $w_\Gamma(\vx)$
of degree $\ell$ (including when $\Gamma$ has a bridge if necessary).
As stated in \Sec{ss:homology}, each $w_\Gamma(\vx)$ must transform to
itself under each symmetry of $\Gamma$, sometimes with a sign correction.
More explicitly, each symmetry of $\Gamma$ yields an equation of the form
\[ w_\Gamma(\vx) = \pm w_\Gamma(f(\vx)) \]
with $f \in \GL(g,\Z)$.  Likewise each IHX relation yields an equation
\[ w_{\Gamma_1}(f_1(\vx)) \pm w_{\Gamma_2}(f_2(\vx))
    \pm w_{\Gamma_3}(f_3(\vx)) = 0 \]
with each $f_k \in \GL(g,\Z)$.  Dually and more abstractly, we obtain
a characterization of the form
\[ \cP_{g,\ell} \cong M \tensor_A S, \]
where:
\begin{enumerate}
\item $A = \Q[\GL(g,\Z)]$ is the group algebra of $\GL(g,\Z)$.
\item $S = S^\ell(\Q^g)$ is a left $A$-module because it is a linear
representation of $\GL(g,\Z)$.
\item $M$ is a finitely presented right $A$-module that depends only on
the loop number $g$.  It is defined by automorphisms of trivalent graphs
and IHX and AS relators.
\end{enumerate}
In this article, we whittled this system of equations down to nothing when
$g \le 4$ and $\ell$ is odd with the aid of vanishing lemmas.  We believe
that a computer algebra calculation could provide similar progress for
the next several values of $g$.

Finally, partial progress towards odd vanishing could also help uncover
a counterexample.  For example, as remarked at the end of \Sec{s:three},
\[ \dim(\cP_{3,(\ell,\ell,\ell)}) = 1 \]
when $\ell$ is odd.  With the methods in this article, it is easier to
see what does not vanish in $\cP_{3,(\ell,\ell,\ell)}$ by first
learning what does vanish.

\bibliography{gr,gt,qa,me}

\end{document}